\theoremstyle{definition}
\theoremstyle{remark}
\theoremstyle{corollary}
\newtheorem*{conjecturereg}{Regularity Conjecture}
\theoremstyle{theorem}
\newtheorem{theoremalpha}{Theorem}
\theoremstyle{corollary}
\newtheorem{theorem}{Theorem}[section]
\newtheorem{lemma}[theorem]{Lemma}
\newtheorem{proposition}[theorem]{Proposition}
\theoremstyle{corollary}
\newtheorem{corollary}[theorem]{Corollary}
\theoremstyle{definition}
\theoremstyle{remark}
\newtheorem{remark}[theorem]{Remark}
\numberwithin{equation}{section}
\newcommand{\Z}{\mathbb{Z}}
\def\P{\mathbb{P}}
\def\Pic{\operatorname{Pic}}
\def\reg{\operatorname{reg}}
\def\codim{\operatorname{codim}}
\def\rank{\operatorname{rank}}
\def\Bs{\operatorname{Bs}}
\newcommand{\suchthat}{\;\ifnum\currentgrouptype=16 \middle\fi|\;}
\title[A bound for Castelnuovo-Mumford regularity by double point divisors]{A bound for Castelnuovo-Mumford regularity \\by double point divisors}
\begin{document}

\author{Sijong Kwak}
\address{Department of Mathematical Sciences, KAIST, Daejeon, Korea}
\email{sjkwak@kaist.ac.kr}

\author{Jinhyung Park}
\address{School of Mathematics, Korea Institute for Advanced Study, Seoul, Korea}
\email{parkjh13@kias.re.kr}

\thanks{S. Kwak was supported by Basic Science Research Program through the National Research Foundation of Korea (NRF) funded by the Ministry of Science and ICT (2015R1A2A2A01004545).}

\subjclass[2010]{Primary 14N05, 13D02, 14N25; Secondary 51N35}

\date{\today}

\keywords{Castelnuovo-Mumford regularity, regularity conjecture, double point divisor, projection, vanishing theorem}

\begin{abstract}
Let $X \subseteq \mathbb{P}^r$ be a non-degenerate smooth projective variety of dimension $n$, codimension $e$, and degree $d$ defined over an algebraically closed field of characteristic zero.
In this paper, we first show that $\text{reg} (\mathcal{O}_X) \leq d-e$, and classify the extremal and the next to extremal cases. Our result reduces the Eisenbud-Goto regularity conjecture for the smooth case to the problem finding a Castelnuovo-type bound for normality. It is worth noting that McCullough-Peeva recently constructed counterexamples to the regularity conjecture by showing that $\text{reg} (\mathcal{O}_X)$ is not even bounded above by any polynomial function of $d$ when $X$ is not smooth.
For a normality bound in the smooth case, we establish that $\text{reg}(X) \leq n(d-2)+1$, which improves previous results obtained by Mumford, Bertram-Ein-Lazarsfeld, and Noma.
Finally, by generalizing Mumford's method on double point divisors, we prove that $\text{reg}(X) \leq d-1+m$, where $m$ is an invariant arising from double point divisors associated to outer general projections. Using double point divisors associated to inner projection, we also obtain a slightly better bound for $\text{reg}(X)$ under suitable assumptions.
\end{abstract}

\maketitle


\section{Introduction}

Throughout the paper, we work over an algebraically closed field of characteristic zero.
Let $H$ be a very ample divisor on a smooth projective variety $X$. A celebrated vanishing theorem of Serre asserts that there exists an integer $k_0=k_0(H)$ such that if $k \geq k_0$, then
$$
H^i(X, \mathcal{O}_X((k-i)H))=0 ~~\text{ for $i> 0$}.
$$
It is a natural problem to find an upper bound for $k_0(H)$ in terms of geometric invariants of $X$ and $H$. The first main result of this paper provides a sharp effective upper bound for $k_0(H)$ using the \emph{delta genus}
$$
\Delta(X, H):=H^{\dim (X)} + \dim (X) - h^0(X, \mathcal{O}_X(H))
$$
of the polarized pair $(X, H)$.

\begin{theoremalpha}\label{thm-effserre}
Let $X$ be a smooth projective variety, and $H$ be a very ample divisor on $X$. 
If $k \geq \Delta(X, H)+1$, then
$$
H^i(X, \mathcal{O}_X((k-i)H))=0 ~~\text{ for $i> 0$}.
$$
In particular, we have
$$
\chi (X, kH)=h^0(X, kH)~~\text{ for $k \geq \Delta(X, H)$.}
$$
\end{theoremalpha}

If $X \subseteq \P^r$ is a rational normal scroll of dimension $n$ and $H$ is its hyperplane section, then $\Delta(X, H)=0$ and $H^n(X, \mathcal{O}_X(-nH)) \neq 0$. This shows that our bound in Theorem \ref{thm-effserre} is sharp. Furthermore, we classify polarized pairs $(X, H)$ such that $H^i(X, \mathcal{O}_X((\Delta(X, H)-i)H)) \neq 0$ for some $i>0$ in Theorem \ref{thm-oxreg}.

\medskip

We now turn to the regularity conjecture, which is closely related to the effective Serre vanishing problem. The \emph{Castelnuovo-Mumford regularity} of an embedded projective variety $X \subseteq \P^r$ is defined as
$$
\reg(X) := \min \{ k+1 \mid \text{$X \subseteq \P^r$ is  $k$-normal and $\mathcal{O}_X$ is $k$-regular} \}.
$$
We denote by $\reg(\mathcal{O}_X)$ the minimum $k$ such that $\mathcal{O}_X$ is $k$-regular, i.e., $H^i(X, \mathcal{O}_X(k-i))=0$ for $i > 0$. It is a fundamental problem, initiated by Castelnuovo, Mumford, Gruson-Lazarsfeld-Peskine, etc., to find an explicit upper bound for $\reg(X)$ in terms of geometric invariants of $X \subseteq \P^r$. One particular reason to consider this problem is that $\reg(X)$ gives an upper bound for the degree of defining equations of $X$ in $\P^r$ because $\reg(X)$ is the maximal degree of syzygies among minimal generators of the defining ideal $I_{X|\P^r}$. 
We also remark that $\reg(\mathcal{O}_X)$ is the maximal degree of syzygies of the section ring $R(X, \mathcal{O}_X(1)):=\bigoplus_{k \geq 0} H^0(X, \mathcal{O}_X(k))$. An optimal Castelnuovo-type bound for the regularity was suggested by Eisenbud-Goto \cite{EG} (see also \cite[Section 4]{GLP}) as follows:

\begin{conjecturereg}\label{regcon}
Let $X \subseteq \P^r$ be a non-degenerate projective variety of degree $d$ and codimension $e$. Then we have 
$$
\reg(X) \leq d-e+1.
$$
\end{conjecturereg}

This conjecture has been a long-standing and challenging problem, and there has been a considerable amount of interesting partial results. Castelnuovo \cite{C} carried out the fundamental work in this direction for smooth space curves, and Gruson-Lazarsfeld-Peskine \cite{GLP} completely settled the regularity conjecture for integral curves. Pinkham \cite{Pin} and Lazarsfeld \cite{Laz87} verified the conjectured bound for the regularity for smooth surfaces, and Niu \cite{Ni} proved the same result for mildly singular surfaces. For smooth threefolds, Kwak \cite{Kw1}, \cite{Kw2} obtained a weaker bound $\reg(X) \leq d-e+2$ for general case and a sharp bound $\reg(X) \leq d-1$ for codimension $2$ case. This result is extended to threefolds with rational singularities by Niu-Park \cite{NP2}. Slightly weaker bounds for lower dimensional smooth varieties were established by Kwak \cite{Kw3}. It is also an important problem to classify varieties with the maximal or the next to maximal regularity. The classification of projective curves with maximal regularity is given in \cite{GLP}, but almost nothing is known in higher dimensions.

To solve the regularity conjecture, it is sufficient to show that
\begin{enumerate}[\indent $(1)$]
 \item $X \subseteq \P^r$ is $(d-e)$-normal, and
 \item $\reg(\mathcal{O}_X) \leq d-e$.
\end{enumerate}
Until recently, most results on the regularity conjecture have been centered around the problem (1). Notice that if the regularity conjecture holds in dimension $n$, then the inequality (2) in dimension $n+1$ follows (see Lemma \ref{lem-hyperplane}). Thus the inequality (2) holds for any projective surfaces and for projective threefolds with isolated singularities (see \cite[Proposition 2.3]{NP}). For the curve case, we have a stronger result: $\reg(\mathcal{O}_C) \leq \left\lfloor \frac{d-1}{e} \right\rfloor +1 $ (see Corollary \ref{cor-oxvan}).

It turns out that the problem (2) is evidently nontrivial and important. Recently, McCullough-Peeva \cite{MP} constructed counterexamples to the regularity conjecture. Their counterexamples actually show that $\reg(\mathcal{O}_X)$ is not even bounded above by any polynomial functions of the degree $d$ when $X \subseteq \P^r$ is a highly singular projective variety. It is worth to mention that the regularity conjecture is still open for normal projective varieties.

In this paper, we verify the inequality (2) $\reg(\mathcal{O}_X) \leq d-e$ when $X$ is a smooth projective variety of arbitrary dimension.  We also classify the extremal and the next to extremal cases.

\begin{theoremalpha}\label{thm-oxreg}
Let $X \subseteq \P^r$ be a non-degenerate smooth projective variety of degree $d$ and codimension $e$.  Then we have the following:
\begin{enumerate}[$(a)$]
 \item $\reg(\mathcal{O}_X) \leq d-e$.
 \item $\reg(\mathcal{O}_X) = d-e$ if and only if $X \subseteq \P^r$ is a hypersurface or a linearly normal variety with $d= e+1$ or $e+2$.
 \item $\reg(\mathcal{O}_X)=d-e-1$ if and only if $X \subseteq \P^r$ is an isomorphic projection of a projective variety in $(a)$ at one point, a linearly normal variety with $d=e+3$ and $e \geq 2$, or a complete intersection of type $(2,3)$.
\end{enumerate}
\end{theoremalpha}

Observe that $\Delta(X, \mathcal{O}_X(1)) + 1 \leq d-e$ and the equality holds when $X \subseteq \P^r$ is linearly normal. By Mumford's regularity theorem, if $\mathcal{O}_X$ is $k$-regular, then it is $(k+1)$-regular. Thus Theorem \ref{thm-oxreg} \textcolor{RoyalBlue}{$(a)$} is equivalent to Theorem \ref{thm-effserre}, and can be rephrased as 
$$
\reg(\mathcal{O}_X) \leq \Delta(X, \mathcal{O}_X(1)) + 1.
$$
We emphasize that the delta genus $\Delta(X, \mathcal{O}_X(1))$ is not an extrinsic invariant depending on the given embedding $X \subseteq \P^r$ but an intrinsic invariant depending only on the variety $X$ and the very ample line bundle $\mathcal{O}_X(1)$. 

The complete classification of projective varieties appeared in Theorem \ref{thm-oxreg} \textcolor{RoyalBlue}{$(b)$} and \textcolor{RoyalBlue}{$(c)$} is given in Remark \ref{rem-classification}.
By the classification, it is easy to check that those varieties satisfy $\reg(X) =d-e+1$ or $\reg(X) = d-e$.

\medskip

A classical approach for the study of the projective geometry of an embedded projective variety $X \subseteq \P^r$ is to consider a general outer projection $\pi \colon X \to \overline{X} \subseteq \P^{n+1}$ onto a hypersurface of degree $d$. Then $\pi$ is a birational morphism, and the non-isomorphic locus of $\pi$ gives rise to an effective divisor linearly equivalent to the \emph{double point divisor from outer projection}
$$
D_{out} := -K_X + (d-n-2)H,
$$
where $H$ is a hyperplane section of $X \subseteq \P^r$. By varying projection centers, Mumford \cite{BM} proved that $D_{out}$ is base point free. In \cite{Noma2}, Noma extended Mumford's result to the inner projection case. Let $\pi \colon X \dashrightarrow \overline{X} \subseteq \P^{n+1}$ be a general inner projection onto a hypersurface of degree $d-e+1$. Similarly as in the outer projection case, we define the \emph{double point divisor from inner projection}
$$
D_{inn}:=-K_X + (d-n-e-1)H.
$$
The main result of \cite{Noma2} says that $D_{inn}$ is semiample unless $X \subseteq \P^r$ is a scroll over a curve, the second Veronese surface, or a Roth variety. When $D_{inn}$ is semiample, it follows from Kodaira vanishing theorem that $\reg(\mathcal{O}_X) \leq d-e$. This bound can also be easily checked for the second Veronese surface and Roth varieties. In Section \ref{sec-oxreg}, we show a sharp bound for $\reg(\mathcal{O}_X)$ when $X \subseteq \P^r$ is a scroll over a curve using Castelnuovo's genus bound and Ionescu-Toma's result \cite{IT}. Our proof involves complicated calculations, but it is a natural generalization of the arguments in the curve case.

\medskip

By Theorem \ref{thm-oxreg}, the regularity conjecture for the smooth case is reduced to finding a sharp Castelnuovo-type bound for normality. Note that the first general result on a bound for $\reg(X)$ was obtained by Mumford \cite{BM}:  if $X \subseteq \P^r$ is a  non-degenerate smooth projective variety of dimension $n$, codimension $e$, and degree $d$, then 
$$
\reg(X) \leq (n+1)(d-2)+2.
$$
Bertram-Ein-Lazarsfeld \cite{BEL} and Noma \cite{Noma1} refined Mumford's result as 
$$
\reg(X) \leq e(d-e)+1  \text{ if $e \leq n~~$  and }~~ \reg(X) \leq (n+1)(d-n-1)+1  \text{ if $e \geq n+1$}.
$$ 
We remark that not every smooth projective variety of dimension $n$ can be embedded in $\P^{2n}$.
Moreover, by Barth-Larsen theorem, if $e \leq n-1$, then $X$ is simply connected, and if $e \leq n-2$, then $\Pic(X)$ is generated by the hyperplane section. Thus we may assume in general that $e \geq n+1$, and then, the previous best bound is essentially that
$$
\reg(X) \leq (n+1)d + \alpha_n,
$$
where $\alpha_n$ is a constant depending only on $n$.
Our next aim is to improve the previous results.

\begin{theoremalpha}\label{thm-n(d-2)+1}
Let $X \subseteq \P^r$ be a non-degenerate smooth projective variety of dimension $n$, codimension $e \geq 2$, and degree $d$. Then we have
$$
\reg(X) \leq n(d-2)+1=nd-2n+1.
$$
\end{theoremalpha}

The main ingredients of the proof are the classical methods of Castelnuovo \cite{C} and Mumford \cite{BM}, \cite{M} and a vanishing theorem of de Fernex-Ein \cite{dFE}.

\medskip

Finally, to get a better normality bound, we extend Mumford's method on double point divisors in \cite{BM} as Lemma \ref{mumlemout}, and then, show a Castelnuovo-Mumford regularity bound for smooth varieties using an invariant arising from double point divisors. Let $V_{out}$ be a subspace of $H^0(X, \mathcal{O}_X(D_{out}))$ spanned by geometric sections, which are global sections of $\mathcal{O}_X(D_{out})$ whose zero loci are non-isomorphic loci of general outer projections, and $c_k$ be the codimension of the image of the multiplication map
$V_{out} \otimes H^0(X, \mathcal{O}_X(k)) \rightarrow H^0(X, \mathcal{O}_X(D_{out} + kH))$
for any integer $k \geq 0$. We define
$$
m:= \min \{c_k + k \mid  -2K_X+(d-2n-3+k)H \text{ is nef} \}.
$$

\begin{theoremalpha}\label{thm-norm-out}
Let $X \subseteq \P^r$ be a smooth projective variety of degree $d$.
Then we have
$$\reg(X) \leq d-1+m.$$
\end{theoremalpha}

Concerning a bound for $m$, we show that $-2K_X + (d-2n-3+k)H$ is nef for $k \geq d-1$ and $c_k=0$ for $k \geq n(d-3)$ in Propositions \ref{d-1nef} and \ref{n(d-3)zero}. See Remark \ref{rem-expectation} for further discussions.

We also have a similar result for the inner projection case.
Let $V_{inn}$ be a subspace of $H^0(X, \mathcal{O}_X(D_{inn}))$ spanned by geometric sections, and $c_k'$ be the codimension of the image of the map
$V_{inn} \otimes H^0(X, \mathcal{O}_X(k)) \rightarrow H^0(X, \mathcal{O}_X(D_{inn} + kH))$
for any integer $k \geq 0$. We define
$$
m':= \min \{c_k' + k \mid  -2K_X+(d-2n-e-2+k)H \text{ is nef} \}.
$$

\begin{theoremalpha}\label{thm-norm-inn}
Let $X \subseteq \P^r$ be a non-degenerate smooth projective variety of dimension $n \geq 2$, codimension $e \geq 2$ and degree $d \geq e+3$ such that it is neither a scroll over a smooth projective curve, the second Veronese surface, a Roth variety, nor a complete intersection of type $(2, 3)$. Suppose that $V_{inn}$ is base point free and the multiplication map
$$
H^0(\P^r, \mathcal{O}_{\P^r}(i)) \otimes H^0(X, \mathcal{O}_X(K_X+(n-1)H)) \longrightarrow H^0(X, \mathcal{O}_X(K_X + (n-1+i)H))
$$ 
is surjective for $i=1,2$. Then we have
$$\reg(X) \leq d-e-1+m'.$$
\end{theoremalpha}

We remark that if the conditions of Theorem \ref{thm-norm-inn} are fulfilled and $m' \leq 2$, then the conjectured bound $\reg(X) \leq d-e+1$ holds.

\medskip

The rest of the paper is organized as follows: We start in Section \ref{sec-prelim} by recalling relevant basic facts including the positivity properties of double point divisors. Section \ref{sec-oxreg} is devoted to the proofs of Theorems \ref{thm-effserre} and \ref{thm-oxreg}.  In Section \ref{sec-ndbound}, we prove Theorem \ref{thm-n(d-2)+1}. Finally, in Section \ref{sec-normality}, we generalize Mumford's method, and show Theorems \ref{thm-norm-out} and \ref{thm-norm-inn}.

\section{Preliminaries}\label{sec-prelim}

In this section, we collect basic definitions and facts that are useful throughout the paper.

\subsection{Castelnuovo-Mumford regularity}
Let $X$ be a projective variety, and $L$ be a very ample line bundle on $X$. 
A coherent sheaf $\mathcal{F}$ on $X$ is said to be \emph{$k$-regular} with respect to $L$ in the sense of Castelnuovo-Mumford if $H^i(X, \mathcal{F}\otimes L^{\otimes k-i})=0$ for all $i >0$. By Mumford's regularity theorem (\cite[Theorem 1.8.5]{positivity}), if $\mathcal{F}$ is $k$-regular, then $\mathcal{F}$ is $(k+1)$-regular. We say that an embedded projective variety $X \subseteq \P^r$ is \emph{$k$-regular} if the ideal sheaf $\mathcal{I}_{X|\P^r}$ is $k$-regular with respect to $\mathcal{O}_{\P^r}(1)$. Note that $X \subseteq \P^r$ is $k$-regular if and only if
\begin{enumerate}[\indent $(1)$]
\item $X \subseteq \P^r$ is $(k-1)$-normal, i.e., the natural restriction map 
$$H^0(\P^{r}, \mathcal{O}_{\P^{r}}(k-1)) \rightarrow H^0 (X, \mathcal{O}_X(k-1))$$ 
is surjective, and
\item $\mathcal{O}_X$ is $(k-1)$-regular with respect to $\mathcal{O}_X(1)$.
\end{enumerate}
The \emph{Castelnuovo-Mumford regularity} of $X$, denoted by $\reg(X)$, is the least integer $k$ such that $X \subseteq \P^r$ is $k$-regular. We also denote by $\reg(\mathcal{O}_X)$ the least integer $k$ such that $\mathcal{O}_X$ is $k$-regular. We refer to \cite[Section 1.8]{positivity} for more details.

\begin{lemma}[{cf. \cite[Lemma 2.2]{NP}}]\label{lem-hyperplane}
Let $X \subseteq \P^r$ be a non-degenerate projective variety of dimension $n \geq 2$, and $Y \subseteq \P^{r-1}$ be a general hyperplane section. Fix integers $k_0$ and $i \geq 1$.
Suppose that
$$
\left\{
\begin{array}{ll}
\text{ $Y \subseteq \P^{r-1}$ is $k$-normal  and $H^1(Y, \mathcal{O}_Y(k))=0$} & \text{if $i=1$ }\\
H^{i-1}(Y, \mathcal{O}_Y(k))=H^i(Y, \mathcal{O}_Y(k))=0 & \text{if $i \geq 2$}
\end{array}
\right.
$$
for any integer $k \geq k_0$. Then $H^i(X, \mathcal{O}_X(k))=0$ for $k \geq k_0-1$.
In particular, if $\reg(Y) \leq k$, then $\reg(\mathcal{O}_X) \leq k-1$.
\end{lemma}

\begin{proof}
From the exact sequence
$$
0 \longrightarrow \mathcal{O}_X(-1) \longrightarrow \mathcal{O}_X \longrightarrow \mathcal{O}_Y \longrightarrow 0,
$$
we obtain an exact sequence
\begingroup\makeatletter\def\f@size{10}\check@mathfonts
$$
\cdots \to H^{i-1}(X, \mathcal{O}_X(k)) \to H^{i-1}(Y, \mathcal{O}_Y(k)) \to H^{i}(X, \mathcal{O}_X(k-1)) \to H^{i}(X, \mathcal{O}_X(k))  \to H^{i}(Y, \mathcal{O}_Y(k)) \to \cdots
$$
\endgroup
If $Y \subseteq \P^{r-1}$ is $k$-normal, then $H^0(X, \mathcal{O}_X(k)) \to H^0(Y, \mathcal{O}_Y(k))$ is surjective. By the assumption in the lemma, we have
$$
H^i(X, \mathcal{O}_X(k-1))=H^i(X, \mathcal{O}_X(k))~~\text{ for $k \geq k_0$}.
$$
By Serre vanishing theorem, $H^i(X, \mathcal{O}_X(k))=0$ for $k \gg 0$. Hence the assertion follows.
\end{proof}

By \cite{GLP} and \cite{Laz87}, the regularity conjecture holds for integral curves and smooth surfaces. Lemma \ref{lem-hyperplane} then implies that if $X \subseteq \P^r$ is a non-degenerate projective variety of dimension $n \geq 2$, codimension $e$, and degree $d$, then we have
$$
H^{n-1}(X, \mathcal{O}_X(k-n+1))=H^n(X, \mathcal{O}_X(k-n))=0~~\text{ for $k \geq d-e$.}
$$
If furthermore $n \geq 3$ and the singular locus of $X$ has codimension $\geq 3$ in $X$, then 
$$
H^{n-2}(X, \mathcal{O}_X(k-n+2))=0~~\text{ for $k \geq d-e$.}
$$

\subsection{Castelnuovo's genus bound}
The following classical result plays an important role in proving Theorem \ref{thm-oxreg}.

\begin{theorem}[Castelnuovo]\label{thm-castelgenus}
Let $C \subseteq \P^r$ be a non-degenerate projective curve of degree $d$ and arithmetic genus $g$, and set $m := \left\lfloor \frac{d-1}{r-1} \right\rfloor$ and $\epsilon := (d-1)- m(r-1)$. Then we have
$$
g \leq {m \choose 2}(r-1) + m \epsilon=\frac{(d-1-\epsilon)(d+\epsilon-r)}{2(r-1)}.
$$
If the equality holds, then $C \subseteq \P^r$ is projectively normal.
\end{theorem}

\begin{corollary}\label{cor-oxvan}
Let $C \subseteq \P^r$ be a non-degenerate smooth projective curve of degree $d$ and codimension $e$. Then $\reg(\mathcal{O}_C) \leq \left\lfloor \frac{d-1}{e} \right\rfloor + 1$.
\end{corollary}

\begin{proof}
Let $g$ be the genus of $C$, and set $m := \left\lfloor \frac{d-1}{e} \right\rfloor$ and $\epsilon := (d-1)- me$. By applying Theorem \ref{thm-castelgenus}, we see that
$2g-2 \leq m(me-e+2\epsilon) -2 < m(me+\epsilon +1) = md.$
This implies that $H^1(C, \mathcal{O}_C(m)) = 0$. Thus $\reg(\mathcal{O}_C) \leq m+1$.
\end{proof}

\subsection{Roth varieties}\label{subsec-roth}
We recall Ilic's construction of Roth varieties in \cite[Theorem 3.7]{Il}. Let $S:=\P(\mathcal{O}_{\P^1}^{\oplus 2} \oplus \mathcal{O}_{\P^1}(a_1) \oplus \cdots \oplus \mathcal{O}_{\P^1}(a_{n-1}))$ be a rational scroll for all $a_i \geq 1$ with the projection $\pi_1 \colon S \to \P^1$.
Consider the birational morphism $\pi_2 \colon S \rightarrow \overline{S} \subseteq \P^N$ given by the complete linear system $|\mathcal{O}_{S}(1)|$ of the tautological line bundle, where $N=a_1 + \cdots + a_{n-1}+n$.
For any integer $b \geq 1$, consider a smooth variety $\widetilde{X} \in |\mathcal{O}_S(b) \otimes \pi_1^*\mathcal{O}_{\P^1}(1)|$ such that $\pi_2|_{\widetilde{X}} \colon \widetilde{X} \xrightarrow{\sim} \pi_2(\widetilde{X})=:X$ is an isomorphism.  Then $X \subseteq \P^N$ is a non-degenerate linearly normal smooth projective variety of dimension $n$. A \emph{Roth variety} $X \subseteq \P^r$ is an isomorphic projection of $X \subseteq \P^N$. Note that 
$\deg(X)=b(N-n)+1=b(a_1 + \cdots + a_{n-1}) +1.$ 
If $b=1$, then $X$ is a rational scroll. For more details, we refer to \cite[Section 3]{Il}.

\subsection{Double point divisors from outer projection}\label{subset-dpdout}
Let $X \subseteq \P^r$ be a non-degenerate smooth projective variety of dimension $n$, codimension $e$, and degree $d$, and $H$ be its hyperplane section. Take a general outer projection
$$
\pi=\pi_{\Lambda} \colon X \rightarrow \overline{X} \subseteq \P^{n+1}
$$
centered at an $(e-2)$-dimensional general linear subspace
$\Lambda \subseteq \P^{r}$ with $\Lambda \cap X = \emptyset$. 
Note that $\deg(\overline{X})=d$.
By the birational double point formula (see e.g., \cite[Lemma 10.2.8]{positivity}), the non-isomorphic locus of $\pi$ defines
an effective divisor $D_{out}(\Lambda)$ linearly equivalent to the \emph{double point divisor from outer projection}
$$D_{out}:= -K_X + (d-n-2)H.$$

\begin{lemma}\label{lem-dpdout=0}
$-K_X+(d-n-2)H=0$ if and only if $X \subseteq \P^r$ is a hypersurface.
\end{lemma}

\begin{proof}
The `if' direction is trivial. For the converse, we assume that $K_X=(d-n-2)H$. Let $C \subseteq \P^{e+1}$ be a general curve section of genus $g$. Then $K_X=(d-3)H$ so that $2g-2=d(d-3)$.
 However, if $e \geq 2$, then Theorem \ref{thm-castelgenus} implies that $2g-2<d(d-3)$. Thus $e=1$.
\end{proof}

The global section $s(\Lambda) \in H^0(X, \mathcal{O}_X(D_{out}))$ with div$(s(\Lambda))=D_{out}(\Lambda)$ is called a \emph{geometric section}, and the the effective divisor $D_{out}(\Lambda)$ is called a \emph{geometric divisor}. Let $V_{out}$ be the linear subspace in $H^0(X, \mathcal{O}_X(D_{out}))$ spanned by geometric sections.  By varying centers of projections, Mumford proved the following.

\begin{proposition}[{\cite[Technical appendix 4]{BM}}]\label{prop-dpdoutbpf}
$V_{out}$ is base point free. In particular, $D_{out}$ is base point free.
\end{proposition}

\subsection{Double point divisors from inner projection}\label{subsec-dpdinn}
Let $X \subseteq \P^r$ be a non-degenerate smooth projective variety of dimension $n \geq 2$, codimension $e \geq 2$, and degree $d$, and $H$ be a general hyperplane section. Take a general inner projection 
$$
\pi_{\Lambda}\colon X\dashrightarrow \overline{X} \subseteq \P^{n+1}
$$
centered at $e-1$ general points $x_1, \ldots, x_{e-1}$ on $X$. 
Note that $\deg(\overline{X})=d-e+1$.
Let $\sigma \colon \widetilde{X} \to X$ be the blow-up at $x_1, \ldots, x_{e-1}$. We have a birational morphism $\widetilde{\pi} \colon \widetilde{X} \to \overline{X}$. We obtain the following commutative diagram
\[\xymatrix{
\widetilde{X} \ar^{\sigma}[r] \ar_{\widetilde{\pi}}[rd] & X \ar@{.>}[d]^-{\pi_{\Lambda}} \ar@{^{(}->}[r] & \P^{r} \ar@{.>}[d]^-{\pi_{\Lambda}}\\
& \overline{X} \ar@{^{(}->}[r] & \P^{n+1}.
}\]
Let $\Lambda= \langle x_1, \ldots, x_{e-1} \rangle$ be the linear span by $x_1, \ldots, x_{e-1}$ so that $\Lambda$ is an $(e-2)$-dimensional linear subspace of $\P^r$. 

In \cite{Noma2}, Noma proves that the birational morphism $\widetilde{\pi} \colon \widetilde{X} \to X$ contracts some divisors if and only if $X \subseteq \P^r$ is a scroll over a smooth projective curve or the second Veronese surface $v_2(\P^2) \subseteq \P^5$ (see \cite[Theorem 3]{Noma2}). Suppose now that $X \subseteq \P^r$ is neither a scroll over a smooth projective curve nor the second Veronese surface. By the birational double point formula (\cite[Lemma 10.2.8]{positivity}), the non-isomorphic locus of $\widetilde{\pi}$ defines an effective divisor $\widetilde{D}(\Lambda)$ linearly equivalent to $-K_{\widetilde{X}} + \widetilde{\pi}^*K_{\overline{X}}$. We define an effective divisor $D_{inn}(\Lambda):=\sigma_* \widetilde{D}(\Lambda)$ on $X$. Then $D_{inn}(\Lambda)$ is linearly equivalent to the \emph{double point divisor from inner projection}
$$
D_{inn}:=-K_X + (d-n-e-1)H.
$$

\begin{lemma}
$-K_X + (d-n-e-1)H=0$ if and only if $X \subseteq \P^r$ is a linearly normal del Pezzo manifold.
\end{lemma}

\begin{proof}
The `if' direction is trivial. For the converse, we assume that $K_X=(d-n-e-1)H$.
Let $C \subseteq \P^{e+1}$ be a general curve section of genus $g$. Then $K_C = (d-e-2)H$ so that $2g-2=d(d-e-2)$. However, if $d \geq e+3$, then Theorem \ref{thm-castelgenus} implies that $2g-2 < d(d-e-2)$. Thus $d \leq e+2$. By considering the classification of varieties of minimal and almost minimal degree (see Remark \ref{rem-classification}), we can conclude that $X \subseteq \P^r$ is a linearly normal del Pezzo manifold.
\end{proof}

The global section $s(\Lambda) \in H^0(X, \mathcal{O}_X(D_{inn}))$ with div$(s(\Lambda))=D_{inn}(\Lambda)$ is called a \emph{geometric section}, and the the effective divisor $D_{inn}(\Lambda)$ is called a \emph{geometric divisor}. Let $V_{inn}$ be the linear subspace in $H^0(X, \mathcal{O}_X(D_{inn}'))$ spanned by geometric sections.
By varying the centers of projections, Noma shows that the base locus $\Bs(|V_{inn}|)$ of $V_{inn}$ lies in the set of non-birational centers of simple inner projections (see \cite[Theorem 1]{Noma2}). Thus we have
$$
\Bs(|V_{inn}|)\subseteq \mathcal{C}(X) :=\{u \in X\mid \ell (X \cap \langle u,x\rangle) \geq 3 \text{ for a general point } x \in X\}.
$$
By \cite[Corollary 6.2]{Noma2}, we know that if $\dim \mathcal{C}(X) \geq 1$, then $X$ is a Roth variety.

\begin{theorem}[{\cite[Theorem 4]{Noma2}}]\label{thm-dinnsemiample}
Suppose that $X \subseteq \P^r$ is neither a scroll over a smooth projective curve, the second Veronese surface, nor a Roth variety. Then $D_{inn}$ is semiample.
\end{theorem}

\section{A sharp Castelnuovo-Mumford regularity bound for the structure sheaf}\label{sec-oxreg}

This section is devoted to the proofs of Theorems \ref{thm-effserre} and \ref{thm-oxreg}. 
Recall that Theorem \ref{thm-effserre} is equivalent to Theorem \ref{thm-oxreg} \textcolor{RoyalBlue}{$(a)$}. Thus we only focus on Theorem \ref{thm-oxreg}.
We start by recalling Noma's result.

\begin{proposition}[{\cite[Corollary 5]{Noma2}}]\label{prop-oxregnoma}
Let $X \subseteq \P^r$ be a non-degenerate smooth projective variety of dimension $n \geq 2$, codimension $e \geq 2$, and degree $d$. Suppose that $X \subseteq \P^r$ is neither a scroll over a smooth projective curve, the second Veronese surface, nor a Roth variety. Then we have
$\reg(\mathcal{O}_X) \leq d-e$.
\end{proposition}

\begin{proof}
We include the proof for reader's convenience. By Theorem \ref{thm-dinnsemiample}, we know that $D_{inn}=-K_X + (d-n-e-1)H$ is semiample so that $-K_X + (d-n-e-1)H + (n+1-i)H$ is ample for $1 \leq i \leq n$. It follows from Kodaira vanishing theorem that
$$
H^i(X, \mathcal{O}_X(d-e-i))=H^i(X, \mathcal{O}_X(K_X + (-K_X + (d-n-e-1)H + (n+1-i)H)))=0
$$
for $1 \leq i \leq n$. Thus the assertion holds.
\end{proof}

If $X=v_2(\P^2) \subseteq \P^5$ is the second Veronese surface, then $\reg(\mathcal{O}_X)=1=\deg(X)-\codim(X)$.
When $X \subseteq \P^r$ is a Roth variety, we can also compute $\reg(\mathcal{O}_X)$.

\begin{proposition}\label{rothoxreg}
Let $X \subseteq \P^r$ be a non-degenerate Roth variety of dimension $n$, codimension $e \geq 2$, and degree $d = b(N-n)+1$ for some integer $b \geq 2$, where it is an isomorphic projection of the linearly normal Roth variety $X \subseteq \P^N$. Then we have $\reg(\mathcal{O}_X)=b$. Furthermore, $\reg(\mathcal{O}_X) \leq d-e-1$ and the equality holds if and only if $b=2, n=3, e=2, d=5$.
\end{proposition}

\begin{proof}
The first assertion is shown in \cite[Corollary 5]{Noma2}, but we include the proof for reader's convenience. We use the notations in Subsection \ref{subsec-roth}. Note that $H^i(X, \mathcal{O}_X(k))=0$ for $0 < i <n$ and $k \in \Z$ (see \cite[Theorem 3.14]{Il}). 
Let $H$ be a general hyperplane section of $X \subseteq \P^r$, and $F$ be the restriction of a fiber of $\pi \colon S \to \P^1$ to $X$. Then $K_X = (b-n-1)H + (N-n-1)F$. By Serre duality, we have
\[
\begin{array}{l}
H^n(X, \mathcal{O}_X(b-1-n))=H^0(X, \mathcal{O}_X(K_X-(b-1-n)H))^*=H^0(X, \mathcal{O}_X((N-n-1)F))^*,\\
H^n(X, \mathcal{O}_X(b-n))=H^0(X, \mathcal{O}_X(K_X - (b-n)H))^*=H^0(X, \mathcal{O}_X(-H + (N-n-1)F))^*.
\end{array}
\]
Now, $(N-n-1)F$ is an effective divisor, and $-H + (N-n-1)F$ is not a pseudoeffective divisor.
Thus we obtain $H^0(X, \mathcal{O}_X((N-n-1)F)) \neq 0$ and $H^0(\mathcal{O}_X(-H + (N-n-1)F))=0$. This proves that $\reg(\mathcal{O}_X)=b$.
For the second assertion, suppose that $b \geq d-e-1$. Then we have
$$
b \geq d-e-1 \geq b(N-n)-e \geq b(N-n)-(N-n),
$$
so we obtain $1 \geq (b-1)(N-n-1)$. Thus $b=N-n=2$ so that $d=5, e=2$. Now, since $a_1 + \cdots + a_{n-1}=2$, it follows that $n=3$. This completes the proof.
\end{proof}

In view of Proposition \ref{prop-oxregnoma}, it only remains to consider the scroll case. To motivate our approach, we first give the proof of Theorem \ref{thm-oxreg} for the curve case.

\begin{proposition}\label{curveoxreg}
Let $C \subseteq \P^r$ be a non-degenerate smooth projective curve of degree $d$, codimension $e$, and genus $g$ Then we have the following:
\begin{enumerate}[$(1)$]
 \item $\reg(\mathcal{O}_C) \leq d-e$.
 \item $\reg(\mathcal{O}_C) = d-e$ if and only if  $C \subseteq \P^r$ is a plane curve, a rational normal curve $(d=e+1)$, or an elliptic normal curve $(d=e+2)$.
 \item $\reg(\mathcal{O}_C) = d-e-1$ if and only if $C \subseteq \P^r$ is an isomorphic projection of a projective variety in $(2)$ at one point, a linearly normal curve of genus $g=2$ and degree $d=e+3$, or a complete intersection of type $(2,3)$.
\end{enumerate}
\end{proposition}

\begin{proof}
Since the assertion is obvious for $e=1$, we assume that $e \geq 2$. Suppose that $d \leq e+3$. By applying Theorem \ref{thm-castelgenus}, we see that $g \leq 2$.  Note that $\reg(\mathcal{O}_C)=1$ if $g=0$ and $\reg(\mathcal{O}_C)=2$ if $g=1,2$. It is then easy to check the proposition. Now, suppose that $d \geq e+4$. We only have to show that $H^1(C, \mathcal{O}_C(d-e-3))=0$ except when $C \subseteq \P^r$ is a complete intersection of type $(2,3)$. It follows from Theorem \ref{thm-castelgenus} and direct calculations that 
$$
(d-e-3)d > 2g-2
$$
except when $(d,e)=(6,2)$. If $(d,e) \neq (6,2)$, then we obtain $H^1(C, \mathcal{O}_C(d-e-3))=0$. If $(d,e)=(6,2)$ and $2g-2 \geq (d-e-3)d = 6$, then $g \geq 4$. We can easily check that the space curve of degree $6$ and genus $g \geq 4$ is a complete intersection of type $(2,3)$. 
\end{proof}

Recall that a \emph{scroll} $X \subseteq \P^r$ over a smooth projective curve $C$ is an isomorphic projection of $X=\P(E) \subseteq \P^N$, where $E$ is a very ample vector bundle on $C$ and the embedding $\P(E) \subseteq \P^N$ is given by the complete linear system $|\mathcal{O}_{\P(E)}(1)|$. 

Let $E$ be a vector bundle on a smooth projective curve $C$ of genus $g$. We define
$$
\mu^-(E):=\min \left\{ \mu(Q)=\frac{\deg Q}{\rank Q}  \suchthat Q \text{ is a quotient bundle of }E \right\}.
$$
If $E$ is semistable, then $\mu^-(E)=\frac{\deg E}{\rank E}$. If $E$ is not semistable, then there is a semistable quotient bundle $Q$ of $E$ with $\mu^-(E)=\mu(Q)=\frac{\deg Q}{\rank Q}$.

\begin{lemma}[{\cite[Lemmas 1.12 and 2.5]{B}}]\label{mu-lemma}
We have the following:
\begin{enumerate}[$(1)$]
 \item $\mu^-(S^k (E))=k \mu^-(E)$ for any integer $k > 0$.
 \item If $\mu^-(E) > 2g-2$, then $H^1(C, E)=0$.
\end{enumerate}
\end{lemma}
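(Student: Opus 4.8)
The plan is to reduce both parts to the Harder--Narasimhan filtration of $E$ together with the behaviour of semistability under symmetric and tensor products. I would treat (2) first, as it is the more elementary. By Serre duality $H^1(C,E)^{\vee}\cong\operatorname{Hom}_{\mathcal{O}_C}(E,\omega_C)$, so it suffices to exclude a nonzero homomorphism $\varphi\colon E\to\omega_C$. The image of such a $\varphi$ is a nonzero subsheaf of the invertible sheaf $\omega_C$ on the smooth curve $C$, hence an invertible subsheaf $L\subseteq\omega_C$ with $\deg L\le\deg\omega_C=2g-2$; on the other hand $\varphi$ exhibits $L$ as a quotient bundle of $E$, so $\mu^-(E)\le\mu(L)=\deg L\le 2g-2$, contradicting $\mu^-(E)>2g-2$. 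Hence $\operatorname{Hom}(E,\omega_C)=0$ and $H^1(C,E)=0$.

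For (1), the inequality $\mu^-(S^mE)\le m\,\mu^-(E)$ is immediate: choose a quotient bundle $Q$ of $E$ with $\mu(Q)=\mu^-(E)$; then $S^mE$ surjects onto $S^mQ$, and the splitting principle gives $\mu(S^mQ)=m\,\mu(Q)$, so $\mu^-(S^mE)\le\mu^-(S^mQ)\le\mu(S^mQ)=m\,\mu^-(E)$. For the reverse inequality I would pass to the Harder--Narasimhan filtration $0=E_0\subset E_1\subset\cdots\subset E_\ell=E$, with semistable graded pieces $G_i=E_i/E_{i-1}$ of slopes $\mu_1>\cdots>\mu_\ell=\mu^-(E)$. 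The bundle $S^mE$ then inherits a filtration whose associated graded pieces are $S^{a_1}G_1\otimes\cdots\otimes S^{a_\ell}G_\ell$ with $a_1+\cdots+a_\ell=m$, each of slope $\sum_i a_i\mu_i\ge m\mu_\ell=m\,\mu^-(E)$. Since in characteristic zero symmetric and tensor products of semistable bundles on a curve remain semistable, these graded pieces are semistable; and a bundle that admits a filtration with semistable graded pieces of slope $\ge s$ has every quotient bundle of slope $\ge s$ (push the filtration forward to the quotient and apply semistability piece by piece). Taking $s=m\,\mu^-(E)$ yields $\mu^-(S^mE)\ge m\,\mu^-(E)$, and (1) follows.

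The main obstacle is exactly the input used in the last step: that symmetric powers and tensor products of semistable vector bundles on a curve stay semistable over $\mathbb{C}$. This is a substantial theorem --- it can be derived from the Narasimhan--Seshadri correspondence between semistable bundles and unitary representations, or from the algebraic arguments of Ramanan--Ramanathan --- and it is precisely where characteristic zero enters; granting it, the slope-and-filtration bookkeeping above is routine. Since the statement is the content of \cite[Lemmas 1.12 and 2.5]{B}, in the paper it is enough to cite it, the sketch above merely recording how the proof goes.
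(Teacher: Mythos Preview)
Your proposal is correct, and you already anticipate the situation in your final sentence: the paper gives no proof of this lemma at all, simply citing \cite[Lemmas 1.12 and 2.5]{B}. Your sketch is a faithful reconstruction of the standard argument behind that citation---Serre duality for (2), the Harder--Narasimhan filtration plus preservation of semistability under tensor operations for (1)---so there is nothing to compare.
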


\begin{lemma}\label{lem-scrolloxreg}
Let $X \subseteq \P^r$ be a non-degenerate scroll of degree $d$ and codimension $e$ over a smooth projective curve of genus $g$. Suppose that $n=\dim (X) \geq 2$.  Then we have the following:
\begin{enumerate}[$(1)$]
 \item If $g = 0$, then $\reg(\mathcal{O}_X)=1$.
 \item If $g = 1$, then $\reg(\mathcal{O}_X)=2$.
 \item If $g \geq 2$, then $\reg(\mathcal{O}_X) \leq d-e-2$.
\end{enumerate}
\end{lemma}

\begin{proof}
Let $E$ be a very ample vector bundle on a smooth projective curve $C$ of genus $g$ such that $X = \P(E) \subseteq \P^r$ is a scroll over $C$, and $F$ be a general fiber of the natural projection $\pi \colon \P(E) \to C$. Note that $H^i(X, \mathcal{O}_X(k))=0$ for $1 < i < n$ and $k \in \Z$. We also have $H^n(X, \mathcal{O}_X(k))=0$ for $k > -n$ and $H^n(X, \mathcal{O}_X(k)) \neq 0$ for $k \leq -n$. Thus we only have to consider the vanishing for $H^1(X, \mathcal{O}_X(k))$.

If $g=0$, then $H^1(X, \mathcal{O}_X)=0$, which implies (1). If $g=1$, then $\mu^-(E)>0 = 2g-2$  due to the very ampleness of $E$. By Lemma \ref{mu-lemma}, $H^1(X, \mathcal{O}_X(1))=H^1(C, E)=0$, which implies (2).

It only remains to consider the case that $g \geq 2$. It suffices to show that  
$$
H^1(X, \mathcal{O}_X(d-e-3))=H^1(C, S^{d-e-3}(E))=0.
$$ 
By Lemma \ref{mu-lemma}, it is enough to show that
\begin{equation}\label{mueq}
(d-e-3)\mu^-(E)=\mu^-(S^{d-e-3}(E))>2g-2.
\end{equation}

We consider the embedding $C \subseteq \P^N$ given by $|\det E|$. The main theorem of \cite{IT} asserts that 
$$
N+1=h^0(C, \det E) \geq h^0(C, E) + n -2.
$$
Since $h^0(C, E) \geq r+1$, it follows that $N \geq r+n-2 \geq n+1$. Consider an inner projection of $C \subseteq \P^N$ centered at $N-n-1$ general points to a projective curve $\overline{C} \subseteq \P^{n+1}$. Note that $n+1 \geq 3$. Then we have
$$
\overline{d}:=\deg(\overline{C}) =d-N+n+1 \leq d-(r+n-2)+n+1 = d-r+3 \leq d-e+1.
$$
Since the inner projection map $C \to \overline{C}$ is birational, the genus $g$ of $C$ is less than or equal to the arithmetic genus of $\overline{C}$.
By applying Theorem \ref{thm-castelgenus} to $\overline{C} \subseteq \P^{n+1}$, we obtain
\begin{equation}\label{casteln}
\frac{(d-e-\epsilon)(d+\epsilon-e-n)}{n}-2 \geq \frac{(\overline{d}-1-\epsilon)(\overline{d}+\epsilon-n-1)}{n}-2 \geq  2g-2,
\end{equation}
where $\epsilon = \overline{d}-\left\lfloor \frac{\overline{d}-1}{n} \right\rfloor n -1$. Note that $0 \leq \epsilon \leq n-1$.

First, suppose that $E$ is semistable so that $\mu^-(E)=\mu(E)=\frac{d}{n}$. It is easy to check that
$$
\frac{(d-e-3)d}{n} > \frac{(d-e-\epsilon)(d+\epsilon-e-n)}{n}
$$
according to $\epsilon =0, 1, 2$, and $\epsilon \geq 3$.  By (\ref{casteln}), we obtain
$$
(d-e-2)\mu^-(E) = \frac{(d-e-2)d}{n} > 2g-2.
$$
Thus we verify (\ref{mueq}) in the case that $E$ is semistable.

Now, suppose that $E$ is not semistable so that there is a semistable quotient bundle $Q$ of $E$ with $\mu^-(E)=\mu(Q)$. Note that $Q$ is also very ample.
Let $d':=\deg Q$ and $n' = \rank Q$. Consider the scroll $X'= \P(Q) \subseteq \P^{n'+e'}$, where the embedding is given by $|\mathcal{O}_{\P(Q)}(1)|$. We further divide into two cases: (i) $d-e-3 \geq d'-e'-3$ and (ii) $d-e-3<d'-e'-3$. Suppose that we are in Case (i).
Since $Q$ is semistable, we know that $(d'-e'-3) \mu(Q)> 2g-2$. Thus we have
$$
(d-e-3)\mu^-(E) \geq (d'-e'-3) \mu(Q) > 2g-2,
$$
which verifies (\ref{mueq}).
Suppose that we are in Case (ii). We have $d'>d-e+e'$ and $n>n'$, so we obtain
$$
(d-e-3)\mu^-(E)=(d-e-3)\frac{d'}{n'} > (d-e-3)\frac{(d-e+e')}{n}.
$$
It is straightforward to check
$$
\frac{(d-e-3)(d-e+e')}{n} \geq \frac{(d-e-\epsilon)(d+\epsilon-e-n)}{n}-2
$$
according to $\epsilon = 0,1,2$, and $\epsilon \geq 3$. Now (\ref{casteln}) implies that
$$
(d-e-3)\mu^-(E) > 2g-2,
$$
which verifies (\ref{mueq}).
Therefore, we complete the proof.
\end{proof}

\begin{proposition}\label{prop-scrolloxreg}
Let $X \subseteq \P^r$ be a non-degenerate scroll of degree $d$ and codimension $e$ over a smooth projective curve of genus $g$. Suppose that $n=\dim (X) \geq 2$.  Then we have the following:
\begin{enumerate}[$(1)$]
 \item $\reg(\mathcal{O}_X) \leq d-e$.
 \item $\reg(\mathcal{O}_X) = d-e$ if and only if $X \subseteq \P^r$ is a rational normal scroll $(d=e+1)$.
 \item $\reg(\mathcal{O}_X) = d-e-1$ if and only if $X \subseteq \P^r$ is an isomorphic projection of $(2)$ at one point or an elliptic normal surface scroll with $d=e+3$.
\end{enumerate}
\end{proposition}

\begin{proof}
By Lemma \ref{lem-scrolloxreg}, we only have to consider the case that $g \leq 1$. 
If $g=0$, then $\reg(\mathcal{O}_X)=1 \leq d-e$ and the equality holds if and only if $X \subseteq \P^r$ is a rational normal scroll. Suppose now that $g=1$. Then $d \geq r+1 = n+e+1$, and the equality holds only if $X \subseteq \P^r$ is linearly normal by \cite[Theorem 1.1]{KP}. We have $\reg(\mathcal{O}_X) =2 \leq n \leq d-e-1$
and the equality holds if and only if $n=2, d=e+3$, i.e., $X \subseteq \P^r$ is an elliptic normal scroll with $d=e+3$. This completes the proof.
\end{proof}

\begin{remark}
Let $X = \P(E) \subseteq \P^r$ be a non-degenerate scroll of degree $d$ and codimension $e$ over a smooth projective curve $C$ of genus $g$, and $F$ be a general fiber of the natural projection $\pi \colon X \to C$. Consider the divisor $D := -K_X + (d-n-e-1)H \equiv (d-e-1)H - (d+2g-2)F$.
If $D$ is nef, then Kodaira vanishing theorem implies that $\reg(\mathcal{O}_X) \leq d-n-e-1$, which is stronger than Lemma \ref{lem-scrolloxreg}.
By Miyaoka's criterion (see \cite[Lemma 5.4]{B}), $D$ is nef if and only if $(d-e-1)\mu^-(E) \geq d+2g-2$. The similar argument in the proof of Lemma \ref{lem-scrolloxreg} shows that $D$ is nef when $E$ is semistable. However, we do not know whether $D$ is nef in general.
\end{remark}

We are ready to prove Theorem \ref{thm-oxreg}.

\begin{proof}[Proof of Theorem \ref{thm-oxreg}]
Let $X \subseteq \P^r$ be a non-degenerate smooth projective variety of dimension $n$, codimension $e$, and degree $d$, and $H$ be its hyperplane section. If $X \subseteq \P^r$ is a hypersurface, then the assertion is trivial. We already show the assertion for the curve case in Proposition \ref{curveoxreg}. Thus we assume that $e \geq 2$ and $n \geq 2$. 

If $X \subseteq \P^r$ is the second Veronese surface $v_2(\P^2) \subseteq \P^5$, then $d=e+1$ and $\reg(\mathcal{O}_X)=1=d-e$. If $X \subseteq \P^r$ is a Roth variety or a scroll over a smooth projective curve, then the assertion follows from Propositions \ref{rothoxreg} and \ref{prop-scrolloxreg}, respectively. Thus we further assume that $X \subseteq \P^r$ is neither the second Veronese surface, a Roth variety, nor a scroll over a smooth projective curve.

\medskip

\noindent $(a)$  The assertion follows from Proposition \ref{prop-oxregnoma}.

\medskip

\noindent $(b)$ The `if' part is trivial by the classification of varieties with $d \leq e+2$ (see Remark \ref{rem-classification}). For the `only if' part, we assume that $\reg(\mathcal{O}_X)=d-e$, i.e., $\mathcal{O}_X$ fails to be $(d-e-1)$-regular. In this case, we observe that $X \subseteq \P^r$ is linearly normal. To see this, suppose that $X \subseteq \P^r$ is not linearly normal. Then it is an isomorphic projection of $X \subseteq \P^{r+1}$, which has degree $d$ and codimension $e+1$. By (a) for $X \subseteq \P^{r+1}$, we have $\reg(\mathcal{O}_X) \leq d-(e+1)=d-e-1$, so we get a contradiction.

By Theorem \ref{thm-dinnsemiample}, $D_{inn}=-K_X + (d-n-e-1)H$ is semiample. Then the divisor 
$$
-K_X + (d-n-e-1)H + (n-i)H=-K_X + (d-e-1-i)H
$$ 
is ample for $1 \leq i \leq n$. It follows from Kodaira vanishing theorem that
$$
H^i(X, \mathcal{O}_X(d-e-1-i))=0 ~~\text{ for $1 \leq i \leq n-1$}.
$$
Thus we must have
$$
H^n(X, \mathcal{O}_X(d-e-1-n)) \neq 0.
$$
Let $Y \subseteq \P^{r-1}$ be a smooth general hyperplane section of $X \subseteq \P^r$. Consider the following exact sequence
$$
0 \longrightarrow \mathcal{O}_X(d-e-n-1) \longrightarrow \mathcal{O}_X(d-e-n) \longrightarrow \mathcal{O}_Y(d-e-n) \longrightarrow 0.
$$
Since $\mathcal{O}_X$ is $(d-e)$-regular, we have $H^n(X, \mathcal{O}_X(d-e-n))=0$. Thus the map
$$
H^{n-1}(Y, \mathcal{O}_Y(d-e-1-(n-1))) \to H^n (X, \mathcal{O}_X(d-e-n-1))
$$
is surjective, so we obtain $H^{n-1}(Y, \mathcal{O}_Y(d-e-1-(n-1))) \neq 0$.
Consequently, for a general smooth curve section $C \subseteq \P^{e+1}$ of $X \subseteq \P^r$, we can conclude that $H^1(C, \mathcal{O}_C(d-e-2)) \neq 0$. By Proposition \ref{curveoxreg}, $C \subseteq \P^{e+1}$ is either a rational normal curve or an elliptic normal curve. Thus $d \leq e+2$, so we complete the proof for $(b)$.

\medskip

\noindent $(c)$ The `if' part is trivial by the classification result in Remark \ref{rem-classification}.
For the `only' if part, we assume that $\reg(\mathcal{O}_X) =d-e-1$. In particular, $\mathcal{O}_X$ fails to be $(d-e-2)$-regular.
If $X \subseteq \P^r$ is not linearly normal, then it is an isomorphic projection of a projective variety $X \subseteq \P^{r+1}$ of degree $d$ and codimension $e+1$. Since $\reg(\mathcal{O}_X)=d-(e+1)$, it follows that $X \subseteq \P^{r+1}$ is a projective variety in (b). Thus we may assume that $X \subseteq \P^r$ is linearly normal.

By Theorem \ref{thm-dinnsemiample}, $D_{inn}=-K_X + (d-n-e-1)H$ is semiample.
Then the divisor
$$
-K_X + (d-n-e-1)H+(n-1-i)H = -K_X + (d-e-2-i)H
$$ 
is ample for $1 \leq i \leq n-2$.
It follows from Kodaira vanishing theorem that
$$
H^i(X, \mathcal{O}_X(d-e-2-i))=0 ~~\text{ for $1 \leq i \leq n-2$}.
$$
Thus we have
$$
H^n(X, \mathcal{O}_X(d-e-2-n)) \neq 0 ~~\text{ or }~~ H^{n-1}(X, \mathcal{O}_X(d-e-1-n)) \neq 0.
$$
Note that $\reg(\mathcal{O}_X)=d-e-1$ implies that 
$$
H^n(X, \mathcal{O}_X(d-e-1-n)) =0~~\text{ and }~~H^{n-1}(X, \mathcal{O}_X(d-e-n))=0.
$$
Let $Y \subseteq \P^{r-1}$ be a smooth general hyperplane section of $X \subseteq \P^r$.
If $H^n(X, \mathcal{O}_X(d-e-2-n)) \neq 0$, then by considering the following short exact sequence
$$
0 \longrightarrow \mathcal{O}_X(d-e-2-n) \longrightarrow \mathcal{O}_X(d-e-1-n) \longrightarrow \mathcal{O}_Y(d-e-1-n) \longrightarrow 0,
$$
we see that $H^{n-1}(Y, \mathcal{O}_Y(d-e-2-(n-1))) \neq 0$. Similarly, one can also check that if $H^{n-1}(X, \mathcal{O}_X(d-e-1-n)) \neq 0$, then $H^{n-2}(Y, \mathcal{O}_Y(d-e-1-(n-1))) \neq 0$.
Consequently, for a general smooth surface section $S \subseteq \P^{e+2}$ of $X \subseteq \P^r$, we have 
$$
H^2(S, \mathcal{O}_S(d-e-4)) \neq 0~~ \text{ or }~~  H^1(S, \mathcal{O}_S(d-e-3)) \neq 0.
$$
Note that $H^2(S, \mathcal{O}_S(d-e-3)=0$ and $H^1(S, \mathcal{O}_S(d-e-2)=0$.

\smallskip

We now claim that
\begin{equation}\label{d-e-3surf}
H^2(S, \mathcal{O}_S(d-e-4)) \neq 0~~ \text{ and } ~~ H^1(S, \mathcal{O}_S(d-e-3)) = 0.
\end{equation}
To show the claim, it is sufficient to prove that $H^1(S, \mathcal{O}_S(d-e-3)) = 0$.
By abuse of notation, we denote by $H$ a general hyperplane section of $S \subseteq \P^{e+2}$.
Since $X \subseteq \P^r$ is neither a second Veronese surface nor a scroll over a curve, so is $S  \subseteq \P^{e+2}$. If $S  \subseteq \P^{e+2}$ is a Roth variety, then $H^1(S, \mathcal{O}_S(k))=0$ for all $k \in \Z$. Thus, by Theorem \ref{thm-dinnsemiample}, we may assume that $D_{inn} = -K_S + (d-e-3)H$ is semiample.

We now show that $D_{inn}$ is big possibly except one case. Consider the divisors $D:=(d-e-2)H$ and $E:=K_S + H$ on $S$. By \cite[Theorem 1.4]{Io}, if $E$ is not base point free, then $S$ is a second Veronese surface, a quadric hypersurface, or a scroll over a curve. Those cases are already excluded, so we may assume that $E$ is base point free. Let $C \subseteq \P^{e+1}$ be a general smooth curve section of $S \subseteq \P^{e+2}$, and $g$ be the genus of $C$. We have
$$
D^2 - 2D.E = (d-e-2)^2d - 2(d-e-2)(K_S.H + H^2)=(d-e-2)\{(d-e-2)d - 4g +4\}.
$$
The following inequality
\begin{equation}\label{dgeq}
\frac{(d-e-2)d + 4}{4} >g,
\end{equation}
implies that $D^2-2D.E>0$ so that $D-E=D_{inn}$ is big by \cite[Theorem 2.2.15]{positivity}. Recall from Theorem \ref{thm-castelgenus} that
$$
\frac{(d-1-\epsilon)(d+\epsilon-e-1)}{2e} \geq g,
$$
where $\epsilon = d-1-\lfloor \frac{d-1}{e} \rfloor e$ and $0 \leq \epsilon \leq e-1$. 
To show (\ref{dgeq}), it is enough to prove that
$$
\frac{(d-e-2)d + 4}{4} > \frac{(d-1-\epsilon)(d+\epsilon-e-1)}{2e},
$$
which is equivalent to 
\begin{equation}\label{dgeq2}
(e-2)(d-e-2)d+4e > 2(e+1-\epsilon)(\epsilon+1).
\end{equation}
We divide into two cases according to $e=2$ and $e \geq 3$. 
Suppose that $e \geq 3$. If $d \leq e+2$, then $H^1(S, \mathcal{O}_S(k))=0$ for $k \in \Z$. Thus we can assume that $d \geq e+3$. If $\epsilon \leq 2$, then $2(e+1-\epsilon)(\epsilon+1) \leq 6e-6$. Since we have $(e-2)(d-e-2)d > 2(e-3)$, we get (\ref{dgeq2}). If $\epsilon \geq 3$, then $d \geq e+4$. Since we have $(e-2)(d-e-2)d>(e+1-\epsilon)2(\epsilon+1)$, we also get (\ref{dgeq2}).
Suppose now that $e=2$. When $\epsilon =0$, it is straightforward to verify (\ref{dgeq2}).
We have shown that $D_{inn}$ is big except when $(e, \epsilon)=(2,1)$. Thus $D_{inn}$ is now nef and big possibly except one case, and hence, it follows from Kawamata-Viehweg vanishing theorem that $H^1(S, \mathcal{O}_S(d-e-3)) = 0$.

It only remains to consider the case that $(e, \epsilon)=(2,1)$.
Theorem \ref{thm-castelgenus} says that $\frac{(d-2)^2}{4} \geq g$. If (\ref{dgeq}) does not hold, then we must have $\frac{(d-2)^2}{4} = g$. By Theorem \ref{thm-castelgenus} again, $C \subseteq \P^{e+1}$ is projectively normal. This implies that the natural restriction map 
$$
H^0(S, \mathcal{O}_S(d-e-2)) \longrightarrow H^0(C, \mathcal{O}_C(d-e-2))
$$ 
is surjective. Recall that $H^1(S, \mathcal{O}_S(d-e-2))=0$. By considering an exact sequence
$$
0 \longrightarrow \mathcal{O}_S(d-e-3) \longrightarrow  \mathcal{O}_S(d-e-2) \longrightarrow  \mathcal{O}_C(d-e-2) \longrightarrow  0,
$$
we get $H^1(S, \mathcal{O}_S(d-e-3)) = 0$. 
This completes the proof of the claim (\ref{d-e-3surf}).

\smallskip

We continue the proof of Theorem \ref{thm-oxreg} \textcolor{RoyalBlue}{$(c)$}.
Let $C \subseteq \P^{e+1}$ be a general smooth curve section of $S \subseteq \P^{e+2}$, and $g$ be the genus of $C$.
By the claim (\ref{d-e-3surf}),  we see that $H^1(C, \mathcal{O}_C(d-e-3)) \neq 0$ so that $\mathcal{O}_C$ fails to be $(d-e-2)$-regular. By Proposition \ref{curveoxreg}, we have either $d \leq e+3, g \leq 2$ or $e=2, d=6, g=4$. 
Consider the first case ($d \leq e+3, g \leq 2$). We already assume that $X \subseteq \P^r$ is linearly normal and is not a scroll over a curve. If $g \leq 1$, then $X \subseteq \P^r$ is a variety of minimal or almost minimal degree, so $\reg(\mathcal{O}_X)=d-e \neq d-e-1$. Thus $g=2$, and consequently, $d=e+3$.
In the second case ($e=2, d=6, g=4$), we can check that $X \subseteq \P^r$ is a complete intersection of type $(2,3)$. Therefore, we complete the proof of Theorem \ref{thm-oxreg} \textcolor{RoyalBlue}{$(c)$}.
\end{proof}

\begin{remark}\label{rem-classification}
Let $X \subseteq \P^r$ be a smooth projective variety of degree $d$ and codimension $e$. Note that $d \geq e+1$.
\begin{enumerate}[$(1)$]
\item  $d=e+1$: $X \subseteq \P^r$ is called a \emph{variety of minimal degree}. By del Pezzo-Bertini (see e.g., \cite{EH}), it is known that such a variety is either a quadric hypersurface, the second Veronese surface $v_2 (\P^2) \subseteq \P^5$, a rational normal curve, or a rational normal scroll.
\item $d=e+2$: $X \subseteq \P^r$ is called a \emph{variety of almost minimal degree}. Such a variety is either an isomorphic projection from a variety of minimal degree at one point, an elliptic normal curve, or a linearly normal del Pezzo manifold. Fujita completely classified del Pezzo manifolds in  \cite{F1} and \cite{F2}.
\item $d=e+3$: If $X \subseteq \P^r$ is not linearly normal, then it is obtained from an isomorphic projection of a variety of degree $d \leq e+2$. Ionescu classified linearly normal varieties  $X \subseteq \P^r$ of degree $d=e+3$ (see \cite[Theorem 3.12]{Io}) as follows: $X \subseteq \P^r$ is either a quartic hypersurface, an elliptic normal surface scroll, or a linearly normal variety with sectional genus $g=2$ which is not a scroll over a curve. The last case is either a blowing-up of Hirzebruch surfaces $F_n=\P(\mathcal{O}_{\P^1} \oplus \mathcal{O}_{\P^1}(-n))$ with $0 \leq n \leq 2$ at $k \leq 7$ points (see \cite[Proposition 3.1]{Io}) or one of 6 cases in higher dimensions (see \cite[Theorem 3.4]{Io}).
\end{enumerate}
\end{remark}

\section{A Castelnuovo-Mumford regularity bound for smooth projective varieties}\label{sec-ndbound}

The aim of this section is to prove Theorem \ref{thm-n(d-2)+1}. 
The following is used in \cite{C} and \cite[Technical appendix 4]{BM} (see also \cite[Section II]{Pin}), but we include the proof for reader's convenience.

\begin{lemma}\label{outidealsurj}
Let $D \in |V|$ be an effective divisor on $X$. Then the natural restriction map
$$
H^0(\P^r, \mathcal{I}_{D|\P^r}(d-1+\ell)) \longrightarrow H^0(X, \mathcal{I}_{D|X}(d-1+\ell))
$$
is surjective for any integer $\ell$.
\end{lemma}

\begin{proof}
We first consider the case that $D=D_{out}(\Lambda)$ is a geometric divisor associated to a general outer projection $\pi=\pi_{\Lambda} \colon X \to \overline{X} \subseteq \P^{n+1}$ centered at a general linear subspace $\Lambda \subseteq \P^r$. Let $\overline{D} := \pi (D)$ be the image of $D$ under the projection $\pi$. Then $\overline{D}$ is a Weil divisor on $\overline{X}$. We have the following diagram:
\[\xymatrix{
D \ar@{^{(}->}[r] \ar^{\pi|_D}[d]  & X \ar@{^{(}->}[r] \ar^-{\pi}[d] & \P^r \ar@{.>}[d]^-{\pi}\\
\overline{D}  \ar@{^{(}->}[r] & \overline{X}  \ar@{^{(}->}[r] & \P^{n+1}.
}\]
Note that $\mathcal{I}_{\overline{D}|\overline{X}} \simeq \pi_* \mathcal{I}_{D|X} = \pi_* \mathcal{O}_X(-D)$ is the conductor ideal sheaf of the finite birational morphism $\pi$ (see \cite[Technical appendix 4]{BM}).
Consider the following short exact sequence
$$
0 \longrightarrow \mathcal{I}_{\overline{X}|\P^{n+1}} \longrightarrow \mathcal{I}_{\overline{D}|\P^{n+1}} \longrightarrow \mathcal{I}_{\overline{D}|\overline{X}} \longrightarrow 0.
$$
We have the following commutative diagram:
\[\xymatrix{
H^0(\P^{n+1}, \mathcal{I}_{\overline{D}|\P^{n+1}}(d-1+\ell)) \ar@{^{(}->}[d] \ar@{->>}[r] &  H^0(\overline{X}, \mathcal{I}_{\overline{D}|\overline{X}}(d-1+\ell)) \ar@{=}[d] \\
H^0(\P^r, \mathcal{I}_{D|\P^r}(d-1+\ell)) \ar[r]  & H^0(X, \mathcal{I}_{D|X}(d-1+\ell)).
}\]
Since $\mathcal{I}_{\overline{X}|\P^{n+1}} \simeq \mathcal{O}_{\P^{n+1}}(-d)$ and $H^1(\P^{n+1}, \mathcal{O}_{\P^{n+1}}(\ell-1))=0$, it follows that the upper horizontal map is surjective. Thus the lower horizontal map is also surjective. 

Now, let $s(\Lambda) \in V$ be a geometric section associated to a general outer projection $\pi=\pi_{\Lambda}$. We may regard $H^0(X, \mathcal{I}_{D_{out}(\Lambda)|X}(d-1+\ell))$ as a linear subspace $s(\Lambda) \cdot H^0 (X, \mathcal{O}_X(K_X + (n+1+\ell)H))$ of $H^0(X, \mathcal{O}_X(d-1+\ell))$. We have shown in the previous paragraph that every global section in $s(\Lambda) \cdot H^0 (X, \mathcal{O}_X(K_X + (n+1+\ell)H))$ is lifted to a global section in $H^0(\P^r, \mathcal{O}_{\P^r}(d-1+\ell))$. Since every global section $s \in V$ is a linear sum of geometric sections, it follows that every global section in $s \cdot H^0 (X, \mathcal{O}_X(K_X + (n+1+\ell)H))$ is lifted to a global section in $H^0(\P^r, \mathcal{O}_{\P^r}(d-1+\ell))$. This implies the lemma.
\end{proof}

\begin{corollary}\label{regdpd}
Let $	X \subseteq \P^r$ be a smooth projective variety of degree $d$, and $D \in |V_{out}|$ be an effective divisor on $X$. Then $X \subseteq \P^r$ is $(d-1+\ell)$-normal for some integer $\ell$ if and only if the map 
$$
H^1(\P^r, \mathcal{I}_{D|\P^r}(d-1+\ell)) \longrightarrow H^1(X, \mathcal{I}_{D|X}(d-1+\ell))
$$
is injective. In particular, if $D \subseteq \P^r$ is $(d-1+\ell)$-normal, then so is $X \subseteq \P^r$.
\end{corollary}

\begin{proof}
We consider an exact sequence
$$
0 \longrightarrow \mathcal{I}_{X|\P^r}  \longrightarrow \mathcal{I}_{D|\P^r}  \longrightarrow \mathcal{I}_{D|X} \longrightarrow 0.
$$
By Lemma \ref{outidealsurj}, the above exact sequence induces the following exact sequence
$$
0 \longrightarrow H^1(\P^r, \mathcal{I}_{X|\P^r}(d-1+\ell))  \longrightarrow H^1(\P^r, \mathcal{I}_{D|\P^r}(d-1+\ell)) \longrightarrow H^1(X, \mathcal{I}_{D|X}(d-1+\ell)) \longrightarrow \cdots.
$$
Then the assertion immediately follows.
\end{proof}

In \cite{M}, Mumford proved that any smooth projective variety $X \subseteq \P^r$ of degree $d$ is scheme-theoretically cut out by hypersurfaces of degree $d$ in $\P^r$. We need a generalization of this result to some singular varieties.

\begin{lemma}\label{lem-cutoutd}
Let $X \subseteq \P^r$ be a projective variety of degree $d$. Suppose that $X$ has only finitely many singular points and the tangent cone of each singular point is a finite union of distinct linear subspaces of $\P^r$. Then $X \subseteq \P^r$ is scheme-theoretically cut out by hypersurfaces of degree $d$ in $\P^r$.
\end{lemma}

\begin{proof}
Let $n:=\dim (X)$. As in \cite{M}, we consider a linear subspace $\Lambda$ of dimension $r-n-2$ in $\P^r$ disjoint from $X$ and the join $H_{\Lambda}$ of $X$ and $\Lambda$ (the locus of lines joining $X$ and $\Lambda$). Then $H_{\Lambda}$ is a hypersurface of degree $\leq d$ in $\P^r$. It is easy to see that
$$
X = \bigcap_{\Lambda \cap X = \emptyset} H_{\Lambda}
$$
as sets. By the conditions on singularities of $X$, we have
$$
TC_{p}(X) = \bigcap_{\Lambda \cap X = \emptyset} TC_{p}(H_{\Lambda})~~\text{ for any point $p \in X$},
$$
where $TC$ stands for the tangent cone. Thus the assertion follows.
\end{proof}

\begin{remark}\label{rem-dfE}
Let $X \subseteq \P^r$ be a non-degenerate smooth projective variety of dimension $n$, codimension $e \geq n+1$, and degree $d$. We take a general outer projection
$$
\pi \colon X \to X' \subseteq \P^{2n}.
$$
We then claim that $X' \subseteq \P^{2n}$ is $k$-normal for $k \geq n(d-2)$. This claim plays a crucial role in proving Theorem \ref{thm-n(d-2)+1}.
Notice that $X'$ has only finitely many double points and the tangent cone of each singular point is the union of two $n$-dimensional linear subspaces of $\P^{2n}$ meeting at one point (see \cite[Example 5.7]{dFE}). By Lemma \ref{lem-cutoutd}, $X' \subseteq \P^{2n}$ is scheme-theoretically cut out by hypersurfaces of degree $d$ in $\P^{2n}$.
Since the pair $(\P^{2n}, nX')$ is log canonical by \cite[Example 5.7]{dFE},  it follows from \cite[Corollary 5.1]{dFE} that
$$
H^1(\P^{2n}, \mathcal{I}_{X'|\P^{2n}}(k))=0~~\text{ for $k \geq nd-2n$},
$$
which shows the desired claim.
\end{remark}

We are ready to give the proof of Theorem \ref{thm-n(d-2)+1}.

\begin{proof}[Proof of Theorem \ref{thm-n(d-2)+1}]
Recall that $X \subseteq \P^r$ is a non-degenerate smooth projective variety of dimension $n$, codimension $e \geq 2$, and degree $d$. We want to show that
$$
\reg(X) \leq n(d-2)+1.
$$
By \cite{GLP}, we may assume that $n \geq 2$. It is easy to check that $d-e \leq n(d-2)$. Thus, by Theorem \ref{thm-oxreg}, it sufficient to show that $X \subseteq \P^r$ is $n(d-2)$-normal. Suppose that $e \leq n$. It then follows from \cite[Theorem 9]{Noma1} that $X \subseteq \P^r$ is $k$-normal for any integer $k \geq e(d-e)-n$. Since $e(d-e)-n \leq n(d-2)+1$, the assertion holds for the case that $e \leq n$.

It only remains to prove the assertion for the case that $e \geq n+1$. If $e \geq n+2$, then we take a general isomorphic projection to $\P^{2n+1}$. Note that if $X \subseteq \P^{2n+1}$ is $n(d-2)$-normal, then so is $X \subseteq \P^r$. Thus we may assume that $e=n+1$. 
Take a general member $D \in |V_{out}|$. By Corollary \ref{regdpd}, it suffices to show that $D \subseteq \P^{2n+1}$ is $n(d-2)$-normal.
Now, take a general projection 
$$
\pi \colon X \to X' \subseteq \P^{2n}
$$
such that $\pi|_D$ is an isomorphism. 
It is enough to prove that $D \subseteq \P^{2n}$ is $n(d-2)$-normal.
Recall from Remark \ref{rem-dfE} that $X' \subseteq \P^{2n}$ is $n(d-2)$-normal. Thus we only have to prove that the restriction map
$$
H^0(X', \mathcal{O}_{X'}(n(d-2))) \longrightarrow H^0(D, \mathcal{O}_D(n(d-2)))
$$
is surjective. Notice that this surjection follows from the cohomology vanishing
$$
H^1(X', \mathcal{I}_{D|X'}(n(d-2))) = 0,
$$
which we shall show below.

Let $Y \subseteq X'$ be a general hyperplane section of $X' \subseteq \P^{2n}$ so that $Y \subseteq \P^{2n-1}$ is a non-degenerate smooth projective variety of dimension $n-1$, codimension $n$, degree $d$.
By induction on $n$, we may suppose that the theorem holds for smooth projective varieties of dimension $\leq n-1$. Thus $Y \subseteq \P^{2n-1}$ is $((n-1)(d-2)+1)$-regular. By Lemma \ref{lem-hyperplane}, we have 
\begin{equation}\label{h1x'}
H^1(X', \mathcal{O}_{X'}((n-1)(d-2)-1))=0.
\end{equation}
We next consider the following commutative diagram with exact sequences
\[
\xymatrix{
& 0 \ar[d] & 0 \ar[d] & &\\
0 \ar[r] & \mathcal{I}_{D|X'} \ar[r] \ar[d]  & \mathcal{O}_{X'} \ar[r] \ar[d] & \mathcal{O}_D \ar[r] \ar@{=}[d] & 0\\
0 \ar[r] & \pi_* \mathcal{I}_{D|X} \ar[d] \ar[r] & \pi_* \mathcal{O}_X \ar[r] \ar[d] & \mathcal{O}_D \ar[r] & 0 \\
& \mathcal{F} \ar[d] \ar@{=}[r] & \mathcal{F} \ar[d] & &\\
& 0 & 0 &&
}
\]
where $\mathcal{F} \simeq \bigoplus_i \mathcal{O}_{p_i}$ is a coherent sheaf supported at singular points $p_i$ of $X'$. By the cohomology vanishing (\ref{h1x'}), the restriction map
\begin{equation}\label{surj_x'->f}
H^0(X', \pi_*\mathcal{O}_X((n-1)(d-2)-1)) \longrightarrow H^0(X', \mathcal{F})
\end{equation}
is surjective. 
Recall that $D \sim -K_X + (d-n-2)H$, where $H$ is a hyperplane section of $X \subseteq \P^{2n+1}$. Then we have $\mathcal{I}_{D|X}(d-1) \simeq \mathcal{O}_X(K_X + (n+1)H)$. Notice that $K_X + (n+1)H$ is base point free. There is a global section in $H^0(X', \pi_* \mathcal{I}_{D|X}(d-1)) \simeq H^0(X, \mathcal{O}_X(K_X + (n+1)H))$ such that it does not vanish at any singular points $p_i$ of $X'$. 
By considering the surjection of (\ref{surj_x'->f}) and the multiplication map
$$
H^0(X', \pi_* \mathcal{I}_{D|X}(d-1)) \otimes H^0(X', \pi_* \mathcal{O}_X((n-1)(d-2)-1)) \longrightarrow H^0(X', \pi_* \mathcal{I}_{D|X}(n(d-2))),
$$
we see that the restriction map
$$
H^0(X', \pi_*\mathcal{I}_{D|X}(n(d-2))) \longrightarrow H^0(X', \mathcal{F})
$$
is surjective. Since we have 
$H^1(X', \pi_*\mathcal{I}_{D|X}(n(d-2))) = H^1(X, \mathcal{I}_{D|X}(n(d-2))) =0$ by Kodaira vanishing theorem, it follows that 
$$
H^1(X', \mathcal{I}_{D|X'}(n(d-2)))=0.
$$ 
This finishes the proof.
\end{proof}

\section{A Castelnuovo-type normality bound by double point divisors}\label{sec-normality}

In this section, we prove Theorems \ref{thm-norm-out} and \ref{thm-norm-inn}, and discuss some related issues. 
We start by recalling a vanishing theorem of Ein-Lazarsfeld in \cite{EL}, which plays an important role in proving Theorems \ref{thm-norm-out} and \ref{thm-norm-inn}.
Let $L$ be a globally generated line bundle on a smooth projective variety $X$, and $W \subseteq H^0(X, L)$ be a base point free subspace. Then we have a short exact sequence
$$0 \longrightarrow M_W \longrightarrow W \otimes \mathcal{O}_X \xrightarrow{e_W} L \longrightarrow 0.$$
If  $W = H^0(X,L)$, we let $M_L := M_W$. Now, let $H$ be a very ample line bundle on $X$, and $B, C$ be nef line bundles on $X$. For integers $f,g$, we write
$L_f = K_X + fH + B \text{ and } N_g = K_X + gH + C$. 
Since we are working in characteristic zero, $\wedge^{q} M_{L_f}$ is a direct summand of $M_{L_f}^{\otimes q}$. Thus the original statements of \cite[Theorem 2.1 and Proposition 3.1]{EL} implies the following.

\begin{theorem}[{\cite[Theorem 2.1 and Proposition 3.1]{EL}}]\label{thm-elvan}
Assume that $(X, H) \neq (\P^n, \mathcal{O}_{\P^n}(1))$.  Then
$H^i (X, \wedge^{q} M_{L_f} \otimes N_g )=0$ for $i \geq 1, f \geq n+1$, and $g \geq n+q-i$.
\end{theorem}

\subsection{Outer projection case}\label{subsec-out}

Throughout this subsection, we use the following notations. Let $X \subseteq \P^{r}$ be a non-degenerate smooth projective variety of dimension $n$, codimension $e$, and degree $d$, and $H$ be its hyperplane section. Since  the regularity conjecture holds for smooth curves and hypersurfaces, we assume that $n \geq 2$ and $e \geq 2$. Recall that
$$
D_{out} = -K_X + (d-n-2)H.
$$
is the double point divisor from outer projection.
Let $V:=V_{out}$ be the linear subspace in $H^0(X, \mathcal{O}_X(D_{out}))$ spanned by geometric sections.
By Proposition \ref{prop-dpdoutbpf}, $V$ is a base point free. For an integer $k \geq 0$, let $V_{k}$ be the image of the multiplication map
$$
V \otimes H^0(X, \mathcal{O}_X(k)) \longrightarrow H^0(X, \mathcal{O}_X(D_{out} + kH)),
$$
and $c_k$ be the codimension of $V_k$ in $H^0(X, \mathcal{O}_X(D_{out} + kH))$.

As a consequence of Lemma \ref{outidealsurj}, we prove the following.

\begin{lemma}[A generalized Mumford's lemma for outer projections]\label{mumlemout}
If the multiplication map
$$
V_k \otimes H^0(X, \mathcal O_X(K_X+(n+1+\ell)H))\longrightarrow H^0(X, \mathcal O_X(d-1+k+\ell))
$$
is surjective for some integers $k \geq 0$ and $\ell \in \Z$, then $X \subseteq \P^r$ is $(d-1+k+\ell)$-normal.
\end{lemma}

\begin{proof}
By considering the following commutative diagram
\[
\xymatrix{
V \otimes H^0(\mathcal{O}_X(k)) \otimes H^0(\mathcal O_X(K_X+(n+1+\ell)H)) \ar[d] \ar@{->>}[r] &   V_k \otimes H^0(\mathcal O_X(K_X+(n+1+\ell)H)) \ar@{->>}[d]\\
V \otimes H^0(\mathcal{O}_X(K_X + (n+1+k+\ell)H)) \ar[r] & H^0(\mathcal O_X(d-1+k+\ell)),
}
\]
we see that the bottom map is surjective.
Let $D \in |V|$ and $E \in |K_X + (n+1+k+\ell)H|$ be any effective divisors. Then, by Lemma \ref{outidealsurj}, the effective divisor $D+E \in |(d-1+k+\ell)H|$ is cut out by hypersurface of degree $d-1+k+\ell$ in $\P^r$. This implies that the restriction map
$$
H^0(\P^r, \mathcal{O}_{\P^r}(d-1+k+\ell)) \longrightarrow H^0(X, \mathcal{O}_X(d-1+k+\ell))
$$
is surjective.
\end{proof}

We recall 
$$
m:= \min \{c_k + k \mid  -2K_X+(d-2n-3+k)H \text{ is nef} \}.
$$

\begin{theorem}\label{thm-d-2-normal}
$X \subseteq \P^r$ is $(d-2+\ell)$-normal for all $\ell \ge m$.
\end{theorem}

\begin{proof}
There is an integer $k \geq 0$ such that $m=c_k + k$.
Consider a filtration 
$$
V_{k}:=V^{c_k} \subseteq V^{c_k-1} \subseteq \cdots \subseteq V^0 = H^0(\mathcal{O}_X(D_{out} + kH))$$ 
by subspaces each having codimension one in the next. Note that each $V^i$ is a base point free subspace for $0 \leq i \leq c_k$. We have exact sequences
$$
0 \longrightarrow M_{V^i} \longrightarrow V^i\otimes \mathcal{O}_X \longrightarrow \mathcal O_X(D_{out}+kH) \longrightarrow 0
$$
$$
0 \longrightarrow M_{V^{i+1}} \longrightarrow M_{V^i} \longrightarrow \mathcal{O}_X \longrightarrow 0.
$$
We first show that
$$
V^{c_k} \otimes H^0(X, \mathcal{O}_X(K_X + (n+c)H)) \longrightarrow H^0(X, \mathcal{O}_X(d-2+k+c))
$$
is surjective for all $c \geq c_k$. For this purpose, it suffices to establish that
\begin{equation}\label{d+mcovan}
H^1(X, M_{V^{c_k}} \otimes \mathcal{O}_X(K_X + (n+c)H)) = 0 ~~\text{ for ~~all} ~~ c \geq c_k.
\end{equation}
Note that $D_{out}+kH = K_X + (n+1)H -2K_X + (d-2n-3+k)H$ and $-2K_X+(d-2n-3+k)H$ are assumed to be nef. By Theorem \ref{thm-elvan}, we have
$$
H^j(X, \wedge^{c_k+j}M_{V^0}\otimes \mathcal{O}_X(K_X + nH + c H)) = 0 ~~~~\text{for ~~all}~~ j\ge 1, ~~c \ge c_k.
$$
Consider the following short exact sequence
$$
0 \longrightarrow \wedge^p M_{V^{i+1}} \longrightarrow \wedge^p M_{V^i} \longrightarrow \wedge^{p-1} M_{V^{i+1}} \longrightarrow 0.
$$
Twisting by $\mathcal{O}_X (K_X + nH + c H)$ and taking cohomology sequence, it is easy to check that
$$
H^j (X, \wedge^{c_1+j}M_{V^1}\otimes \mathcal{O}_X(K_X + nH + c H)) = 0 ~~~~\text{for ~~all}~~ j\ge 1, ~~c \ge c_k, ~~c-1\ge c_1\ge 0.
$$
By an induction on $i$ and similar arguments, we can also show that
$$
H^j (X, \wedge^{c_i+j}M_{V^i}\otimes \mathcal{O}_X(K_X + nH + c H)) = 0 ~~~~\text{for ~~all}~~ j\ge 1, ~~c \ge c_k, ~~c-i\ge c_i\ge 0. $$
In particular, we get the cohomology vanishing (\ref{d+mcovan}).
This implies that the multiplication map
$$
V_k \otimes H^0(X, \mathcal{O}_X(K_X + (n+c)H)) \to H^0(X, \mathcal{O}_X(d-2+k+c))
$$
is surjective for all $c \geq c_k$. Now, the assertion follows from  Lemma \ref{mumlemout}.
\end{proof}

To prove Theorem \ref{thm-norm-out}, we may assume that $X \subseteq \P^r$ is non-degenerate and $n, e \geq 2$. Then Theorem \ref{thm-norm-out} follows from Theorem \ref{thm-oxreg} and Theorem \ref{thm-d-2-normal}.

In view of Theorem \ref{thm-d-2-normal}, it is natural to ask when the divisor $-2K_X+(d-2n-3+k)H$ appeared in the definition of $m$ is nef. If $H$ is sufficiently positive (e.g., $H=\ell A$, where $A$ is a very ample divisor and $\ell \geq 2$ is an integer), then $-2K_X + (d-2n-3)H$ is already nef. In general, we have the following.

\begin{proposition}\label{d-1nef}
$-2K_X+(d-2n-3+k)H$ is nef for $k \geq d-1$.
\end{proposition}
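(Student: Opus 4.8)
The plan is to reduce the assertion to the semiampleness of the double point divisor $D_{out}$ from outer projection, which is already available via Mumford's Proposition~\ref{Mumford}. The key observation is a bookkeeping one: since $D_{out} = -K_X + (d-n-2)H$, we have $2D_{out} = -2K_X + (2d-2n-4)H$, and therefore for every integer $k$
$$
-2K_X + (d-2n-3+k)H = 2D_{out} + (k-d+1)H.
$$
In particular, at the threshold $k = d-1$ the divisor is exactly $2D_{out}$, and for $k \geq d-1$ it differs from $2D_{out}$ by the nonnegative multiple $(k-d+1)H$ of the hyperplane class.

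Next I would invoke Proposition~\ref{Mumford}: the subspace $V \subseteq H^0(\mathcal{O}_X(D_{out}))$ spanned by geometric sections is base point free, so the evaluation map $V \otimes \mathcal{O}_X \to \mathcal{O}_X(D_{out})$ is surjective; a fortiori the full evaluation map $H^0(X, \mathcal{O}_X(D_{out})) \otimes \mathcal{O}_X \to \mathcal{O}_X(D_{out})$ is surjective, so $\mathcal{O}_X(D_{out})$ is globally generated and hence nef. Since $H$ is very ample (it is a hyperplane section) and $k - d + 1 \geq 0$ for $k \geq d-1$, the divisor $2D_{out} + (k-d+1)H$ is a nonnegative $\mathbb{Z}$-combination of nef divisors, hence nef. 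This proves the proposition.

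There is essentially no obstacle in this argument: the only substantive input is the base point freeness of $V$, which is the classical result recalled in Proposition~\ref{Mumford}, and everything else is the elementary fact that sums of nef divisors are nef. The only thing worth a remark is that the bound $k \geq d-1$ is precisely what makes the $H$-coefficient at least $2(d-n-2)$, i.e. what lets one split off the nef divisor $2D_{out}$.
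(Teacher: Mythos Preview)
Your proof is correct and follows essentially the same route as the paper: rewrite $-2K_X+(d-2n-3+k)H = 2D_{out} + (k-d+1)H$, invoke Proposition~\ref{Mumford} for the base point freeness (hence nefness) of $D_{out}$, and observe that $(k-d+1)H$ is nef once $k \geq d-1$. The paper's argument is just a terser version of what you wrote.
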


\begin{proof}
We write $-2K_X+(d-2n-3+k)H = 2D_{out} + (k-d+1)H$. Since $D_{out}$ is base point free by Proposition \ref{prop-dpdoutbpf}, the assertion follows.
\end{proof}

To give a bound for $m$ in terms of $d,n,e$, it is necessary to control the codimension $c_k$.
One approach is to use the hyperplane section method.  For this purpose, we fix some notations. 
Let $Y \subseteq \P^{r-1}$ be a general hyperplane section of $X \subseteq \P^r$, which is a non-degenerate smooth projective variety of dimension $n-1$, codimension $e$, and degree $d$.
Note that
$$
D_{out}|_Y= -(K_X + H)|_Y + (d-(n-1)-2)H|_Y = -K_Y + (d-(n-1)-2)H|_Y.
$$
By abuse of notation, we write $D_{out}$ for the double point divisor from outer projection of $Y \subseteq \P^{r-1}$ and $H$ for general hyperplane section of $Y \subseteq \P^{r-1}$.
Let $V_X$ and $V_Y$ be subspaces of $H^0(X, \mathcal{O}_X(D_{out}))$ and $H^0(Y, \mathcal{O}_Y(D_{out}))$ spanned by geometric sections, respectively, and $c_k^X$ and $c_k^Y$ be the codimensions of the images of the maps $V_X \otimes H^0(\mathcal{O}_X(k)) \rightarrow H^0(X, \mathcal{O}_X(D_{out} + kH))$ and $V_Y \otimes H^0(\mathcal{O}_Y(k)) \rightarrow H^0(Y, \mathcal{O}_Y(D_{out}+kH))$, respectively, for any integer $k \geq 0$.

Note that there is a natural injective map $|V_Y| \hookrightarrow |V_X|$, which induces an injective map 
$V_Y \hookrightarrow V_X$.
We then have the following: 
\begin{enumerate}[$(1)$]
 \item If $H^1(X, \mathcal{O}_X(k))=H^1(X, \mathcal{O}_X(k+1))=H^1(X, \mathcal{O}_X(D_{out} + kH))=H^2(X, \mathcal{O}_X(k))=H^1(Y, \mathcal{O}_Y(k+1))=0$ (these conditions are satisfied if $k \geq d-1$), then we have
$$
c_{k+1}^X \leq c_k^X + c_{k+1}^Y.
$$
 \item If $H^1(X, \mathcal{O}_X( \ell ))=H^1(X, \mathcal{O}_X(D_{out}+\ell H))=H^2(X, \mathcal{O}_X(\ell))=H^1(Y, \mathcal{O}_Y(\ell+1))=0$ for $\ell \geq k$ and $M_{V_X} \otimes \mathcal{O}_Y$ is $(k+1)$-regular (these conditions are satisfied if $k \geq d-1$ and $c_{k}^Y=0$), then we have
$$
c_{k+1}^X < c_k^X~\text{ provided that $c_k^X \neq 0$.}
$$
\end{enumerate}
The proof is straightforward by the standard vector bundle techniques, so we leave the details to the interested readers.

Suppose now that $c_{\ell}^Y=0$ for some integer $\ell \geq d-1$. From now on, we write $c_k=c_k^X$. Then, for $k \geq \ell$, we have
$$
c_{k+1} +k+1 \leq c_k + k
$$ 
provided that $c_k \neq 0$. Since $-2K_X + (d-2n-3+k)H$ is nef for $k \geq d-1$ by Proposition \ref{d-1nef}, calculating $m$ is almost equivalent to finding the minimum $k$ such that $c_k=0$ for $k \geq d-1$.
In this respect, we show the following.

\begin{proposition}\label{n(d-3)zero}
We have $c_{k}=0$ for $k \geq n(d-3)$.
\end{proposition}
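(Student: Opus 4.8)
The plan is to push Mumford's Koszul-complex argument (\cite[Technical appendix 4]{BM}, cf.\ the remark after Lemma \ref{mumlemout}) through with the line bundle $D_{out}+kH$ in place of a multiple of $H$. Fix $n+1$ general geometric sections $s^{(1)},\dots,s^{(n+1)}$ of $\mathcal{O}_X(D_{out})$. Since $V$ is base point free by Proposition \ref{Mumford}, general members of $|V|$ have empty common base locus on the $n$-dimensional $X$, so the subspace $W:=\langle s^{(1)},\dots,s^{(n+1)}\rangle\subseteq V$ is base point free and the evaluation $W\otimes\mathcal{O}_X\to\mathcal{O}_X(D_{out})$ is surjective with kernel a vector bundle $M_W$ of rank $n$. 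As $W\subseteq V$, it suffices to prove that $W\otimes H^0(\mathcal{O}_X(kH))\to H^0(\mathcal{O}_X(D_{out}+kH))$ is surjective for $k\ge n(d-3)+1$, and tensoring $0\to M_W\to W\otimes\mathcal{O}_X\to\mathcal{O}_X(D_{out})\to 0$ by $\mathcal{O}_X(kH)$ reduces this to
$$
H^1\bigl(X,\, M_W\otimes\mathcal{O}_X(kH)\bigr)=0 .
$$

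To get this vanishing I would resolve $M_W$ by the Koszul complex of the surjection $W\otimes\mathcal{O}_X\to\mathcal{O}_X(D_{out})$, i.e.\ the exact complex
$$
0\to\wedge^{n+1}W\otimes\mathcal{O}_X(-nD_{out})\to\cdots\to\wedge^{3}W\otimes\mathcal{O}_X(-2D_{out})\to\wedge^{2}W\otimes\mathcal{O}_X(-D_{out})\to M_W\to 0 .
$$
Twisting by $\mathcal{O}_X(kH)$ and splitting into short exact sequences, a routine cohomology chase shows that $H^1(M_W\otimes\mathcal{O}_X(kH))=0$ as soon as
$$
H^{p}\bigl(X,\, \mathcal{O}_X(-pD_{out}+kH)\bigr)=0\qquad\text{for }p=1,\dots,n .
$$

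It then remains to verify these $n$ vanishings. Using $D_{out}=-K_X+(d-n-2)H$ one has $-pD_{out}+kH\equiv pK_X+(k-p(d-n-2))H$, so by Serre duality the required vanishing is equivalent to
$$
H^{n-p}\bigl(X,\, \mathcal{O}_X((p-1)D_{out}-(k-d+n+2)H)\bigr)=0 .
$$
For $1\le p\le n-1$ (hence $n-p\ge 1$) I would dualize once more and invoke Kodaira vanishing, which reduces the claim to the ampleness of $(k-d+n+2)H-(p-1)D_{out}$; writing this as $(p-1)\bigl(K_X+(n+1)H\bigr)+(k-d+n+2-(p-1)(d-1))H$ and using that $K_X+(n+1)H=(d-1)H-D_{out}$ is nef (a standard consequence of the cone theorem: $-K_X\cdot C\le(n+1)H\cdot C$ for every curve $C$), one checks ampleness for $k$ in the stated range. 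For $p=n$ the claim reads $H^0(X,\mathcal{O}_X((n-1)D_{out}-(k-d+n+2)H))=0$, i.e.\ the class $(n-1)D_{out}-(k-d+n+2)H$ is not effective; intersecting it with $H^{n-1}$ and using $D_{out}\cdot H^{n-1}=d(d-3)+2-2g\le d(d-3)+2$ (with $g$ the sectional genus), one finds the class has strictly negative degree --- hence is not even pseudoeffective --- exactly when $k\ge n(d-3)+1$. This last estimate is what pins down the bound $n(d-3)+1$.

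The main obstacle I anticipate is precisely the final numerical bookkeeping: showing that the single threshold $k\ge n(d-3)+1$ simultaneously clears all the inequalities arising from $p=1,\dots,n$, with $p=n$ the binding one. One also has to dispatch the degenerate cases by hand --- $X$ a hypersurface (where $D_{out}=0$ and the statement is trivial), $X$ a quadric, and the handful of varieties of small degree relative to $n$ appearing on the classification lists (for which $V$ is large and one argues directly, or uses Theorem \ref{oxreg}).
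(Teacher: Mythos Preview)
Your approach is essentially the paper's: pick an $(n+1)$-dimensional base-point-free $W\subset V$, reduce surjectivity to $H^1(M_W(kH))=0$, and unwind this via the Koszul complex to the line-bundle vanishings $H^p(\mathcal{O}_X(-pD_{out}+kH))=0$ for $1\le p\le n$. The paper runs the same chase dually, through the short exact sequences $0\to\mathcal{O}_X(-D_{out})\otimes\wedge^{n-i}M_W^*\to\wedge^{n-i+1}W^*\otimes\mathcal{O}_X\to\wedge^{n-i+1}M_W^*\to 0$, which is equivalent.

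The one substantive difference is your choice of nef class, and it is what creates the obstacles you anticipate. You use $K_X+(n+1)H$ (cone theorem); with that input the inequality you extract for $p=n-1$ reads $k>n(d-2)-d$, which exceeds $n(d-3)$ precisely when $d<n$, forcing your ``degenerate cases''. The paper instead uses that $K_X+nH$ is base-point-free (hence nef) for $X\neq\P^n$, which is Ein's theorem \cite{E}. With this one-unit improvement you can write
\[
-pD_{out}+kH \;=\; K_X+(p-1)(K_X+nH)+\bigl(k-p(d-2)+n\bigr)H,
\]
and Kodaira vanishing kills $H^p$ for \emph{every} $1\le p\le n$ as soon as $k>p(d-2)-n$; the maximum over $p$ occurs at $p=n$ and gives exactly $k\ge n(d-3)+1$. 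So the single threshold handles all $p$ uniformly, and in particular the top case $p=n$ falls to Kodaira directly --- your intersection-number argument for $p=n$ is correct but becomes unnecessary, and the small-degree case analysis disappears.
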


\begin{proof}
We can take a subspace $W$ of $V$ generated by $n+1$ general geometric sections so that $W$ is also base point free. We have the following exact sequence
$$
0 \longrightarrow M_W  \longrightarrow W \otimes \mathcal{O}_X  \longrightarrow \mathcal{O}_X(D_{out})  \longrightarrow 0.
$$
Since $M_W$ is a rank $n$ vector bundle, $M_W \simeq  \wedge^{n-1} M_W^* \otimes \mathcal{O}_X(-D_{out})$. 
We shall show that
$$
H^1(X, \wedge^{n-1} M_W^* \otimes \mathcal{O}_X(-D_{out} + kH))=0~~\text{ for $k \geq n(d-3)$}.
$$
Now, since $(X, \mathcal{O}(1)) \neq (\P^n, \mathcal{O}_{\P^n}(1))$, it follows from the main result of \cite{E} that $K_X + nH$ is base point free. Notice that $k \geq n(d-3) \geq jd-2j-n$ for $1 \leq j \leq n-1$. We write
$$
-jD_{out} + kH = K_X + (j-1)(K_X + nH) + (k-jd + 2j + n)H.
$$
Then, by Kodaira vanishing theorem, we have
$$
H^i(X, \mathcal{O}_X(-jD_{out}+kH))=0~~ \text{ for $i > 0$},
$$
For each $1 \leq i \leq n$, from the following exact sequence
$$
0  \rightarrow \wedge^{n-i}M_W^* \otimes \mathcal{O}_X(-D_{out})  \rightarrow \wedge^{n-i+1}W^* \otimes \mathcal{O}_X \rightarrow  \wedge^{n-i+1}M_W^*  \rightarrow 0,
$$
we obtain
$$
H^{i-1}(\wedge^{n-(i-1)}M_W^* \otimes \mathcal{O}_X(-(i-1)D_{out}+kH))=H^i(\wedge^{n-i}M_W^* \otimes \mathcal{O}_X(-iD_{out}+kH))~~\text{ for $2 \leq i \leq n$.}
$$
This implies that
$$
H^1(X, \wedge^{n-1} M_W^* \otimes \mathcal{O}_X(-D_{out} + kH)) = H^n(X, \mathcal{O}_X(-nD_{out}+kH))=0.
$$
This proves the assertion.
\end{proof}

\begin{remark}\label{rem-expectation}
There are only a few cases such that $n(d-3) < d-1$, and the regularity conjecture holds for all such cases. Thus we may assume that $n(d-3) \geq d-1$. Then Propositions \ref{d-1nef} and \ref{n(d-3)zero} imply that $m \leq n(d-3)$. By Theorem \ref{thm-norm-out}, we obtain 
$$
\reg(X) \leq (n+1)(d-3)+2.
$$
which is unfortunately weaker than Theorem \ref{thm-n(d-2)+1}. In the proof of Proposition  \ref{n(d-3)zero}, only $n+1$ general geometric sections in $V$ are used. We expect that a systematic approach to use of \emph{all} geometric sections in $V$ would lead us to a better bound for $m$ as well as $\reg(X)$.
\end{remark}

\subsection{Inner projection case}\label{subsec-inn}

The cohomological method in Subsection \ref{subsec-out} can be directly generalized to the inner projection case under suitable assumptions. Throughout this subsection,  we use the following notations. Let $X \subseteq \P^{r}$ be a non-degenerate smooth projective variety of dimension $n$, codimension $e$, and degree $d$, and $H$ be its hyperplane section. As in Subsection \ref{subsec-out}, we may assume that $n, e \geq 2$. 
We also need the following conditions:
\begin{enumerate}[$(1)$]
\item  Assume that $X \subseteq \P^r$ is neither a scroll over a curve, the second Veronese surface in $\P^5$, nor a Roth variety. Then we can consider the double point divisor from inner projection
$$
D_{inn} =-K_X + (d-n-e-1)H,
$$
which is semiample by Theorem \ref{thm-dinnsemiample}. Let $V':=V_{inn}$ be the linear subspace of $H^0(\mathcal{O}_X(D_{inn}))$ spanned by geometric sections. 
\item Assume that $V'$ is base point free. Recall that $\Bs(|V'|)\subseteq \mathcal{C}(X)$. Thus if $\mathcal{C}(X) = \emptyset$, then $V'$ is base point free (see \cite[Corollary 3]{Noma0} for examples with $\mathcal{C}(X) = \emptyset$).
\item Assume that the multiplication map
$$
W_i \otimes H^0(X, \mathcal{O}_X(K_X+(n-1)H)) \longrightarrow H^0(X, \mathcal{O}_X(K_X + (n-1+i)H))
$$ 
is surjective for $i=1,2$, where $W_k$ is the image of the map $H^0(\P^r, \mathcal{O}_{\P^r}(k)) \to H^0(X, \mathcal{O}_X(k))$ for any integer $k$. Since $\mathcal{O}_X(K_X+(n+1)H)$ is $0$-regular with respect to $\mathcal{O}_X(1)$, it follows that the multiplication map
$$
W_k \otimes H^0(X, \mathcal{O}_X(K_X+ (n-1)H)) \longrightarrow H^0(X, \mathcal{O}_X(K_X + (n-1+k)H))
$$
is surjective for any integer $k \geq 0$.
\end{enumerate}

We first prove the following, which is a counterpart of Lemma \ref{outidealsurj}.

\begin{lemma}\label{surjidealinn}
Let $D \in |V'|$ be an effective divisor on $X$.
Then the natural restriction map
$$
H^0(\P^r, \mathcal{I}_{D|\P^r}(d-e-2+\ell)) \longrightarrow H^0(X, \mathcal{I}_{D|X}(d-e-2+\ell))
$$
is surjective for any integer $\ell \leq 0$.
\end{lemma}

\begin{proof}
As in Lemma \ref{outidealsurj}, it is sufficient to show the assertion for a geometric divisor $D=D_{inn}(\Lambda)$  associated to a general inner projection $\pi=\pi_{\Lambda} \colon X \dashrightarrow \overline{X} \subseteq \P^{n+1}$ centered at a general linear subspace $\Lambda \subseteq \P^r$ meeting $X$ at general $(e-1)$ points on $X$. Let $x_1, \ldots, x_{e-1}$ be general points on $X$ such that $X \cap \Lambda = \{ x_1, \ldots, x_{e-1} \}$, and $\sigma \colon \widetilde{X} \rightarrow X$ be the blow-up at $x_1, \ldots, x_{e-1}$ with exceptional divisors $E_1, \ldots, E_{e-1}$. Put $E:=E_1 + \cdots + E_{e-1}$.
Note that the birational morphism $\widetilde{\pi}:=\sigma \circ \pi \colon \widetilde{X} \rightarrow \overline{X}$ is a resolution of singularities and has no exceptional divisor by our assumption.
Let $\overline{H}$ be a general hyperplane section of $\overline{X} \subseteq \P^{n+1}$. Then we have $\widetilde{\pi}^* \overline{H} = \sigma^* H - E$. Let $\widetilde{D}$ be the non-isomorphic locus of $\widetilde{\pi}$ so that $\sigma(\widetilde{D})=D_{inn}(\Lambda)$.  By the birational double point formula (\cite[Lemma 10.2.8]{positivity}), we have
$$
\widetilde{D} \sim -K_{\widetilde{X}} + (d-n-e-1)\widetilde{\pi}^* \overline{H} \sim \sigma^*(D_{inn}) + (-d+e+2)E.
$$
The image $\overline{D}:=\overline{\widetilde{\pi}(\widetilde{D})}$ is a Weil divisor on $\overline{X}$. We then obtain the following diagram:
\[
 \xymatrix{
\widetilde{D}  \ar@{^{(}->}[r] \ar_-{\widetilde{\pi}|_{\widetilde{D}}}[d]  & \widetilde{X} \ar^{\sigma}[r] \ar_-{\widetilde{\pi}}[rd] & X \ar@{.>}[d]^-{\pi} \ar@{^{(}->}[r] & \P^{r} \ar@{.>}[d]^-{\pi}\\
\overline{D}  \ar@{^{(}->}[rr] & & \overline{X} \ar@{^{(}->}[r] & \P^{n+1}.
}
\]
Note that $\mathcal{I}_{\overline{D}|\P^{n+1}}=\text{adj}(\P^{n+1}, \overline{D})$ is the adjoint ideal and  $\mathcal{I}_{\overline{D}|\overline{X}} \simeq \widetilde{\pi}_* \mathcal{I}_{\widetilde{D}|\widetilde{X}}= \widetilde{\pi}_{*}\mathcal{O}_{\widetilde{X}} (-\widetilde{D})$ (see \cite[Proposition 9.3.48]{positivity}).
Consider the following short exact sequence
$$
0 \longrightarrow \mathcal{I}_{\overline{X}|\P^{n+1}} \longrightarrow \mathcal{I}_{\overline{D}|\P^{n+1}} \longrightarrow \mathcal{I}_{\overline{D}|\overline{X}} \longrightarrow 0.
$$
For any integer $\ell$, we have
$$
(d-e-2+\ell)\widetilde{\pi}^*\overline{H} - \widetilde{D} \sim K_{\widetilde{X}} + (n-1+\ell)\widetilde{\pi}^* \overline{H} \sim \sigma^*(K_X + (n-1+\ell)H) - \ell E.
$$
Thus, for any integer $\ell \leq 0$, we obtain
$$
H^0(\overline{X}, \mathcal{I}_{\overline{D}|\overline{X}}(d-e-2+\ell)) = H^0(X, \mathcal{O}_X(K_X + (n-1+\ell)H)) = H^0(X, \mathcal{I}_{D|X}(d-e-2+\ell)).
$$
We have the following commutative diagram:
\[\xymatrix{
H^0(\P^{n+1}, \mathcal{I}_{\overline{D}|\P^{n+1}}(d-e-2+\ell)) \ar@{^{(}->}[d] \ar@{->>}[r] &  H^0(\overline{X}, \mathcal{I}_{\overline{D}|\overline{X}}(d-e-2+\ell)) \ar@{=}[d] \\
H^0(\P^r, \mathcal{I}_{D|\P^r}(d-e-2+\ell)) \ar[r]  & H^0(X, \mathcal{I}_{D|X}(d-e-2+\ell)).
}\]
Since $\mathcal{I}_{\overline{X}|\P^{n+1}} \simeq \mathcal{O}_{\P^{n+1}}(-(d-e+1))$ and $H^1(\P^{n+1}, \mathcal{O}_{\P^{n+1}}(\ell))=0$, it follows that the upper horizontal map is surjective. Thus the lower horizontal map is also surjective.
\end{proof}

\begin{remark}
Unlike Lemma \ref{outidealsurj}, we need to assume that $\ell \leq 0$ in Lemma \ref{surjidealinn}. If $\ell \geq 1$, then we only have
$$
H^0(\overline{X}, \mathcal{I}_{\overline{D}|\overline{X}}(d-e-2+\ell)) \subseteq H^0(X, \mathcal{I}_{D|X}(d-e-2+\ell)),
$$
so we cannot deduce the surjectivity of the map 
$$
H^0(\P^r, \mathcal{I}_{D|\P^r}(d-e-2+\ell)) \longrightarrow H^0(X, \mathcal{I}_{D|X}(d-e-2+\ell)).
$$ 
For instance, consider a projected second Veronese embedding $X \subseteq \P^4$. We have $d=4$ and $e=2$. Let $A=\frac{1}{2}H$ be the ample generator of $\Pic(X)$. Then $D_{inn} \sim A$. 
For any  geometric divisor $D \in |D_{inn}|$, we see that $H^0(\P^4, \mathcal{I}_{D|\P^4}(1)) \to H^0(X, \mathcal{I}_{D|X}(1))$ is not surjective.
\end{remark}

Let $V_k'$ be the image of of the multiplication map
$$
V' \otimes H^0(X, \mathcal{O}_X(k)) \longrightarrow H^0(X, \mathcal{O}_X(D_{inn} + kH))
$$
for any integer $k \geq 0$, and $c_k'$ be the codimension of $V_k'$ in $H^0(X, \mathcal{O}_X(D_{inn} + kH))$.

\begin{lemma}[A generalized Mumford's lemma for inner projection]\label{mumleminn}
If the multiplication map
$$
V_k' \otimes H^0(X, \mathcal{O}_X(K_X + (n-1)H)) \longrightarrow H^0(X, \mathcal{O}_X(d-e-2+k))
$$
is surjective for some integer $k \geq 0$, then $X \subseteq \P^r$ is $(d-e-2+k)$-normal.
\end{lemma}

\begin{proof}
By considering the following commutative diagram
\[
\xymatrix{
V' \otimes H^0(\mathcal{O}_X(k)) \otimes H^0(\mathcal O_X(K_X+(n-1)H)) \ar[d] \ar@{->>}[r] &   V_k' \otimes H^0(\mathcal O_X(K_X+(n-1)H)) \ar@{->>}[d]\\
V' \otimes H^0(\mathcal{O}_X(K_X + (n-1+k)H)) \ar[r] & H^0(\mathcal O_X(d-e-2+k)),
}
\]
we see that the bottom map is surjective. Recall that the multiplication map
$$
W_k \otimes H^0(X, \mathcal{O}_X(K_X+ (n-1)H)) \longrightarrow H^0(X, \mathcal{O}_X(K_X + (n-1+k)H))
$$
is surjective. By Lemma \ref{surjidealinn}, we have the map
$$
V' \otimes   H^0(X, \mathcal{O}_X(K_X + (n-1)H)) \longrightarrow W_{d-e-2} \subseteq H^0(X, \mathcal{O}_X(d-e-2)).
$$
By considering the following commutative diagram
\[
\xymatrix{
V' \otimes W_k \otimes H^0(\mathcal O_X(K_X+(n-1)H)) \ar[d] \ar@{->>}[r] &   V' \otimes H^0(\mathcal O_X(K_X+(n-1+k)H)) \ar@{->>}[d]\\
W_{d-e-2} \otimes W_k \ar[r] & H^0(\mathcal O_X(d-e-2+k)),
}
\]
we see that the bottom map is surjective, and hence, the assertion follows.
\end{proof}

As in the outer projection case, we set
$$
m':= \min \{c_k' + k \mid  -2K_X+(d-2n-e-2+k)H \text{ is nef} \}.
$$
By the same argument in the proof of Theorem \ref{thm-d-2-normal} using Lemma \ref{mumleminn}, we can prove the following theorem. We leave the details to the interested readers.

\begin{theorem}\label{thm_inn}
$X \subseteq \P^r$ is $(d-e-2+\ell)$-normal for all $\ell \ge m'$.
\end{theorem}

Now, Theorem \ref{thm-norm-inn} follows from Theorem \ref{thm-oxreg} and Theorem \ref{thm_inn}.

$ $

\end{document}